\titlespacing{\subparagraph}{0em}{0em}{0.5em}
\theoremstyle{definition}
\newtheorem*{theorem*}{Theorem}
\newtheorem*{definition*}{Definition}
\newtheorem{theorem}{Theorem}[section]
\newtheorem{lemma}[theorem]{Lemma}
\newtheorem{prop}[theorem]{Proposition}
\newtheorem{cor}[theorem]{Corollary}
\newtheorem{definition}[theorem]{Definition}
\newtheorem{ex}[theorem]{Example}
\newtheorem{rmk}[theorem]{Remark}
\newcommand{\zz}{\mathbb{Z}}
\newcommand{\nn}{\mathbb{N}}
\newcommand{\cc}{\mathbb{C}}
\newcommand{\ff}{\mathbb{F}}
\newcommand{\bbb}{\mathcal{B}}
\newcommand{\vz}{\vec{z}}
\newcommand{\ph}{\phantom{-}}
\newcommand{\gl}{\mathfrak{gl}}
\newcommand{\ad}{\mathrm{ad}}
\newcommand{\spann}{\mathrm{span}}
\newcommand{\quo}[2]{{\raisebox{.2em}{$#1$}\left/\raisebox{-.2em}{$#2$}\right.}}
\newcommand\scalemath[2]{\scalebox{#1}{\mbox{\ensuremath{\displaystyle #2}}}}
\begin{document}

\title[]{More on characteristic polynomials of Lie algebras}

\author{Korkeat Korkeathikhun}
\address{Korkeat Korkeathikhun\\ Department of Mathematics, Faculty of Science, National University of Singapore, 119076, Singapore} 
\email{\tt korkeatk@nus.edu.sg, korkeat.k@gmail.com}

\author{Borworn Khuhirun}
\address{Borworn Khuhirun\\ Department of Mathematics and Statistics, Faculty of Science and Technology, Thammasat University, Rangsit Campus, Pathum Thani 12121, Thailand} 
\email{\tt borwornk@mathstat.sci.tu.ac.th}

\author{Songpon Sriwongsa}
\address{Songpon Sriwongsa \\ Department of Mathematics \\ Faculty of Science \\ King Mongkut's University of Technology Thonburi (KMUTT) \\ 126 Pracha-Uthit Road \\ Bang Mod, Thung Khru \\ Bangkok 10140, Thailand}
\email {\tt songpon.sri@kmutt.ac.th}

\author{Keng Wiboonton$^*$}
\thanks{*Corresponding Author}
\address{Keng Wiboonton \\ Department of Mathematics and Computer Scienc \\ Faculty of Science \\ Chulalongkorn University \\ Bangkok 10330, Thailand}
\email{\tt keng.w@chula.ac.th}

\keywords{Characters; Characteristic polynomials; Nilpotent Lie Algebras; Simple Lie Algebras}

\subjclass[2020]{17B10; 15A15}

\begin{abstract}
In recent years, the notion of characteristic polynomial of representations of Lie algebras has been widely studied. This paper provides more properties of these characteristic polynomials. For simple Lie algebras, we characterize the linearization of characteristic polynomials. Additionally, we characterize nilpotent Lie algebras via characteristic polynomials of the adjoint representation.
\end{abstract}

\maketitle

\bigskip

\section{Introduction}


The determinant and eigenvalues of square matrices are classical and important tools in linear algebras and many areas in mathematics. Moreover, both tools are used in
applications in other areas in science and engineering. A basic ingredient of studying eigenvalues and eigenvectors of a square matrix $A$ is its characteristic polynomial defined by $\det (zI - A)$. It is then natural to extend this notion of characteristic polynomial for several square matrices of the same size. One generalization of this notion, which has recently been studied, is defining the characteristic polynomial for square matrices $A_1, \dots, A_n$ of the same size to be $\det(z_0 I + z_1 A_1 + \cdots z_n A_n)$. This generalization is interesting and plays a significant role in many branches of mathematics such as matrix theory, operator theory, group theory and Lie  theory.

A similar genalization of this notion was actually dated back to 1886 when Dedekind computed the group determinant for certain finite non-abelian groups and proved the nice result that, the group
determinant for a finite abelian group factors  as a
product of linear factors over the complex numbers. After communicated by letters from Dedekind, in 1896, Frobenius discovered a way to define characters of general finite groups, developed many theorems about them and used these results to prove that the group determinant for a general finite group can be written as the product of irrecible factors. This study led by Dedekind and Frobenius is undoubtedly the staring point of representation theory of finite groups. For more information of this development, we refer the reader to \cite{CurtisArticle}, \cite{CurtisBook} and the references [12, 13, 17] cited in \cite{spectral}.

In modern day terminology, for a finite group $G = \{1, g_1, \dots, g_n\}$, Dedekind studied the group determinant 
$$\det(z_0I+z_1\lambda_G(g_1)+\cdots+z_n\lambda_G(g_n))$$
where $\lambda_G$ is the left regular representation of $G$. As mention above, this group determinant, which is a homogeneous polynomial in several variables, has a nice factorization. In addition, one may define this polynomial associated to a general finite-dimensional representation of $G$. For further development in this direction, we refer the reader to the references cited in \cite{spectral}. 

By the same token, one may consider this polynomial of a Lie algebra $L$ with respect to a finite-dimensional representation of $L$. In \cite{hu2019determinants}, Hu and Zhang set up the study of characteristic polynomials of Lie algebras with respect to their finite-dimensional representations. For a Lie algebra $L$ with a basis $\{e_1, \dots, e_s\}$ and a finite-dimensional representation $\phi$ of $L$, the corresponding characteristic polynomial is defined in  \cite{hu2019determinants} as
\begin{align*}
p_{L,\phi}(z) :=\det\left(z_0I+z_1\phi(e_1) + \cdots + z_s\phi(e_s)\right).
\end{align*}

With respect to the canonical basis of $\mathfrak{sl}_2(\cc)$, the formula of the characteristic polynomial of $\mathfrak{sl}_2(\cc)$ is obtained in \cite{hu2019determinants}, \cite{chen2019characteristic} and \cite{hu2021eigenvalues}, which is 
\[p_{\phi}(z_0, z_1, z_2, z_3)=
    \begin{cases}
        z_0\displaystyle\prod_{i=1}^{m/2}(z_0^2 - (2i)^2 (z_1^2+z_2 z_3)), &\quad \text{ if $m$ is even,}\vspace{.5 cm}\\ 
        \displaystyle\prod_{i=0}^{(m-1)/2}(z_0^2 -(2i+1)^2 (z_1^2+z_2 z_3)), &\quad \text{ if $m$ is odd,}
    \end{cases}
\]
where $\phi$ is an irreducible representation of $\mathfrak{sl}_2(\cc)$ of highest weight $m$. Later in \cite{jiang2022characteristic}, the authors proved that any finite-dimensional representation of $\mathfrak{sl}_2(\cc)$ is uniquely determined by its characteristic polynomial. In other words, there is a one-to-one correspondence between isomorphism classes of finite-dimensional representations and their characteristic polynomials. Moreover, they gave a monoid structure on the set of all characteristic polynomials which is compatible with the tensor product of representations. The results of one-to-one correspondence and a monoid structure also hold for any complex simple Lie algebras which was proved in \cite{geng2022characteristic}.

In \cite{hu2019determinants}, characteristic polynomials of solvable Lie algebras of finite dimension over an algebraically closed field $\mathbb{F}$ of characteristic zero are studied. One of their main results is that a finite-dimensional Lie algebra over $\ff$ is solvable if and only if the characteristic polynomial is completely reducible with respect to any finite-dimensional representation and any basis. This is a new characterization of solvability of Lie algebras. 

In this work, we first focus on characteristic polynomials of representations of simple Lie algebras and characterize the linearization of such polynomials. Moreover, we study the characterization of 
nilpotent Lie algebras via characteristic polynomials of the adjoint representation.
 Our paper is organized as follows. The definitions and certain properties of characteristic polynomials of a Lie algebra of finite dimension are recalled in Section 2. In Section 3, we assume that a Lie algebra is simple and is over the field of complex numbers $\cc$. We show that the linearization of characteristic polynomials are closely related to characters of representations. Consequently, we give a characterization of the linearization of characteristic polynomials which is independent of prescribed representations. In Section 4, we characterize nilpotent Lie algebras using characteristic polynomials of the adjoint representation. This result is analogous to Theorem 2 in \cite{hu2019determinants}.

\section{Characteristic polynomials}
In this section, we set up our notations, and recall definitions and results concerning characteristic polynomials. Additional details can be found in \cite{geng2022characteristic} or in \cite{hu2019determinants}.
 Let $\ff$ be an algebraically closed field of characteristic 0 and $L$ a finite-dimensional Lie algebra over $\ff$ with a basis $\mathcal{B}=\{e_1,\dots, e_s\}$. Let $\phi:L\to \mathfrak{\mathfrak{gl}}(V)$ be a finite-dimensional representation of $L$. The \textsl{characteristic polynomial of $L$ with respect to $\phi$ and the basis $\bbb$}, denoted by $p_{\phi,\bbb}$ or $p_{V,\bbb}$, is defined by
$$
p_{\phi, \mathcal{B}}(\vz)=p_{\phi, \mathcal{B}}(z_0, z_1, \dots, z_s):=\det\left(z_0 I+\sum_{i=1}^s z_i\phi(e_i)\right)
$$
where $\vz=(z_0, z_1, \dots, z_s)$.\\
\indent In particular, if $L$ is a Lie subalgebra of $\mathfrak{gl}(V)$ with a basis $\mathcal{B}=\{e_1,\dots, e_s\}$, then \textsl{the characteristic polynomial of $L$ with respect to the basis $\bbb$}, denoted by $p_{L,\bbb}$, is defined by
\begin{align*}
p_{L,\bbb}(\vz)=p_{L,\bbb}(z_0,z_1,\ldots,z_s):=\det\left(z_0I+\sum_{i=1}^s z_ie_i\right).
\end{align*}
\indent It follows immediately from the properties of the determinant that characteristic polynomial is a homogeneous polynomial of degree equal to $\dim V$ and invariant under change of basis of $V$. The characteristic polynomial is said to be {\it completely reducible} if it can be factored into a product of linear polynomials.

Let $\mathcal{B}_1, \mathcal{B}_2$ be bases of $L$ and $P$ a transition matrix from $\mathcal{B}_2$ to $\mathcal{B}_1$. By \cite[Theorem 2.2]{geng2022characteristic}, we have
$$p_{\phi, \mathcal{B}_2}(\vz)=p_{\phi, \mathcal{B}_1}(\vz D)$$
where
\[D=
\begin{pmatrix}
    1 & 0 \\
    0 & P^\top
\end{pmatrix}.
\]
Hence $p_{\phi, \mathcal{B}_1}(\vz)$ and $p_{\phi, \mathcal{B}_2}(\vz)$ coincide up to linear change of variables. In particular, they have the same reducibility. Then we can fix a basis of $L$ and simply write $p_\phi(\vz)$ or $p_L(\vz)$. 
\begin{ex}
    Let $M_1^{10}=\spann\;\bbb_1=\spann\{e_1,e_2,e_3,e_4\}$ be a Lie algebra defined by 
    $$
    [e_4,e_1]=e_2, ~[e_4,e_2]=e_1, ~[e_3,e_1]=e_1 \text{ and } [e_3,e_2]=e_2
    $$
    (See \cite{solv}). Then we consider the adjoint representation $\ad:L\to\gl(L)$ so that
    \begin{align*}
    \ad_{e_1}=-E_{13}-E_{24}, ~\ad_{e_2}=-E_{14}-E_{23}, ~\ad_{e_3}=E_{11}+E_{22}, ~\ad_{e_4}=E_{12}+E_{21},
    \end{align*}
    where $E_{ij}$ denote the $n\times n$ elementary matrix whose $(i,j)$-entry is 1 and all other entries are zero.
    Thus the characteristic polynomial
    $$
    p_{\ad,\bbb_1}(z)=
    \det
        \begin{pmatrix}
            z_0+z_3&z_4&-z_1&-z_2\\
            z_4&z_0+z_3&-z_2&-z_1\\
            0&0&\ph z_0&\ph0\\
            0&0&\ph0&\ph z_0
        \end{pmatrix}
    =z_0^2(z_0+z_3+z_4)(z_0+z_3-z_4).
    $$
    On the other hand, let $x_1=e_1+e_2, x_2=e_2+e_3, x_3=e_3+e_4$ and $x_4=e_4$. As $M_1^{10}=\spann\;\bbb_2=\spann\{x_1,x_2,x_3,x_4\}$, we have
    \begin{align*}
        \ad_{x_1}&=\ad_{e_1}+\ad_{e_2}=-E_{13}-E_{24}-E_{14}-E_{23},\\
        \ad_{x_2}&=\ad_{e_2}+\ad_{e_3}=-E_{14}-E_{23}+E_{11}+E_{22},\\
        \ad_{x_3}&=\ad_{e_3}+\ad_{e_4}=E_{11}+E_{22}+E_{12}+E_{21},\\
        \ad_{x_4}&=\ad_{e_4}=E_{12}+E_{21},
    \end{align*}
    Hence the characteristic polynomial
    $$
    p_{\ad,\bbb_2}(\vz)=\det
        \begin{pmatrix}
            z_0+z_2+z_3&z_3+z_4&-z_1&-z_1-z_2\\
            z_3+z_4&z_0+z_2+z_3&-z_1-z_2&-z_1\\
            0&0&\ph z_0&\ph0\\
            0&0&\ph0&\ph z_0
        \end{pmatrix}
    $$
    is equal to $z_0^2(z_0+z_2+2z_3+z_4)(z_0+z_2-z_4)$, which has the same reducibility as $p_{\ad,\bbb_1}(\vz)$. Observe that the transition matrix from $\bbb_2$ to $\bbb_1$ is
    $$
    P=
    \begin{pmatrix}
    1&0&0&0\\
    1&1&0&0\\
    0&1&1&0\\
    0&0&1&1
    \end{pmatrix}
    ~\text{ and }~
    D=
    \begin{pmatrix}
    1 & 0 \\
    0 & P^\top
    \end{pmatrix}
    =
    \begin{pmatrix}
    1&0&0&0&0\\
    0&1&1&0&0\\
    0&0&1&1&0\\
    0&0&0&1&1\\
    0&0&0&0&1
    \end{pmatrix},
    $$
    so
    \begin{align*}
        p_{\ad,\bbb_1}(\vz D)=p_{\ad,\bbb_1}(z_0,z_1,z_1+z_2,z_2+z_3,z_3+z_4)=p_{\ad,\bbb_2}(\vz).
    \end{align*}
\end{ex}

The set of all characteristic polynomial of $L$ with respect to a representation $\phi$ is denoted by
$$\mathbf{CP}_L=\{p_\phi\mid \phi \text{ is a representation of } L\}.$$
\indent For any representations $U$ and $V$ of $L$, it follows from the definition that 
\begin{align}\label{. in CP}
    p_{U \oplus V}=p_{U}\cdot p_{V}.
\end{align}
Moreover, the characteristic polynomial of a representation $\phi$ and its dual $\phi^*$ are related as described in the following proposition. 


\begin{prop}
    Let $p_{\phi}\in \mathbf{CP}_L$ for some representation $\phi$ of $L$ of dimension $k$. Then the characteristic polynomial of the dual representation $p_{\phi^*}$ of $\phi$ is
    $$p_{\phi^*}(z_0, z_1, \dots, z_k)=(-1)^{\dim \phi}p_{\phi}(-z_0, z_1, \dots, z_k).$$
\end{prop}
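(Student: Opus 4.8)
The plan is to unwind the definition of the dual representation and then apply two elementary facts about determinants: invariance under transpose, and the effect of scaling rows. Recall that for a representation $\phi\colon L\to\gl(V)$, the dual (contragredient) representation $\phi^{*}$ on $V^{*}$ is given by $\phi^{*}(x)=-\phi(x)^{\top}$ for all $x\in L$; the minus sign is exactly what is needed for $\phi^{*}$ to respect the Lie bracket. So, fixing the basis $\bbb=\{e_1,\dots,e_s\}$ of $L$, I would start from the definition and write
\[
p_{\phi^{*}}(\vz)=\det\!\left(z_0 I+\sum_{i=1}^{s} z_i\,\phi^{*}(e_i)\right)
=\det\!\left(z_0 I-\sum_{i=1}^{s} z_i\,\phi(e_i)^{\top}\right).
\]

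The next step is purely formal: since $I^{\top}=I$ and transposition is additive, the matrix inside is the transpose of $z_0 I-\sum_{i=1}^{s} z_i\,\phi(e_i)$, and $\det(M^{\top})=\det(M)$ gives
\[
p_{\phi^{*}}(\vz)=\det\!\left(z_0 I-\sum_{i=1}^{s} z_i\,\phi(e_i)\right).
\]
Finally I would factor $-1$ out of each of the $\dim V$ rows of this matrix; this produces a global factor $(-1)^{\dim V}=(-1)^{\dim\phi}$ and leaves $\det\!\bigl(-z_0 I+\sum_{i=1}^{s} z_i\,\phi(e_i)\bigr)$, which is precisely $p_{\phi}(-z_0,z_1,\dots,z_s)$ by the definition of $p_{\phi}$. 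Chaining the three displays yields the asserted identity $p_{\phi^{*}}(\vz)=(-1)^{\dim\phi}\,p_{\phi}(-z_0,z_1,\dots,z_s)$.

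I do not expect a real obstacle here: the argument is a short determinant manipulation valid over any field. The only points deserving care are citing the correct form of the contragredient representation (including its sign) and tracking signs at the end — one must check that the minus signs attached to $z_1,\dots,z_s$ genuinely disappear (they do, after taking the transpose, since they were on the off-diagonal part of $-\phi(e_i)^{\top}$, not inside the determinant as a scalar on all rows), so that the only surviving sign is the factor $(-1)^{\dim\phi}$ together with the sign change in the variable $z_0$.
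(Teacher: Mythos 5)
Your proof is correct and follows essentially the same route as the paper's: both start from $\phi^{*}(x)=-\phi(x)^{\top}$, use $\det(M^{\top})=\det(M)$, and extract the factor $(-1)^{\dim\phi}$ by pulling $-1$ out of every row (the paper merely performs these two determinant steps in the opposite order within a single chain of equalities). The only blemish is the closing heuristic remark about the minus signs on $z_1,\dots,z_s$ being "off-diagonal," which is imprecisely worded but does not affect the validity of the displayed argument.
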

\begin{proof}
    It is known that $\phi^*(x)=-\phi(x)^\top$ for any $x\in L$ where $\top$ is the transpose. Let $\{x_1,\dots, x_k\}$ be a basis of $L$. Hence
    \begin{align*}
        p_{\phi^*}&=\det\left(z_0 I + \sum_{i=1}^{k}z_i\phi^*(x_i)\right)\\
        &=\det\left(z_0 I - \sum_{i=1}^{k}z_i\phi(x_i)^\top\right)\\
        &=(-1)^{\dim \phi}\det\left(-z_0 I + \sum_{i=1}^{k}z_i\phi(x_i)\right)^\top\\
        &=(-1)^{\dim \phi}p_{\phi}(-z_0, z_1, \dots, z_k).
    \end{align*}
    as desired.
\end{proof}

\section{The linearized characteristic polynomials and the Weyl groups}
In this section, we fulfill the detail from \cite{geng2022characteristic} about characteristic polynomials of a finite-dimensional complex simple Lie algebra $L$ of rank $\ell$. We first recall certain definitions and facts about representation rings and characters which can be found in \cite{fulton2013representation} or \cite{humphreys2012introduction}. At the end of this section, we show that the image of $\rho$ defined in \cite{geng2022characteristic} can be characterized via the action of the Weyl group.

Let $\Phi, \Lambda$ and $W$ be the root system, the weight lattice and the Weyl group of $L$, respectively. Fix a cartan subalgebra $H$ with a fixed basis $\{h_1,\dots, h_\ell\}$. We fix a canonical basis $\{h_1, \dots, h_\ell, E_\alpha : \alpha\in \Phi\}$ of $L$ so that
$$p_\phi=\det\left(z_0 I + \sum_{i=1}^\ell z_i \phi(h_i) + \sum_{\alpha\in \Phi}z_{\alpha}\phi(E_\alpha)\right)$$
for any representation $\phi$ of $L$. 

The representation ring $R(L)$ of $L$ is a ring whose elements are isomorphism classes $[V]$ of finite-dimensional representations of $L$ and the addition and multiplication are given by
\begin{align*}
    [U]+[V]&=[U\oplus V]\\
    [U]\cdot [V]&= [U\otimes V]
\end{align*}
for any representations $U$ and $V$ of $L$. By \cite[Theorem 3.3]{geng2022characteristic}, the map 
\begin{align*}
    p:R(L)&\to \mathbf{CP}_L\\
    [V]&\mapsto p_V
\end{align*}
is well-defined and bijective.

Let $\mathbb{Z}[\Lambda]$ be the integral group ring on the abelian group $\Lambda$. The elements of $\mathbb{Z}[\Lambda]$ are formal sums
$\sum_{\lambda\in \Lambda} n_{\lambda}e^{\lambda}$ where $n_\lambda\in\mathbb{Z}$ and all $n_\lambda$ but finitely many are zeros. The addition is canonically defined and the multiplication is given by $e^\lambda e^\mu=e^{\lambda+\mu}$ for any $\lambda, \mu\in\Lambda$. A \textsl{character homomorphism} $\mathrm{ch}:R(L)\to \mathbb{Z}[\Lambda]$ is defined by
$$\mathrm{ch}([V])=\sum_{\lambda}(\dim V_\lambda)e^\lambda$$
where 
$$V_\lambda=\{v\in V\mid h\cdot v=\lambda(h)v \text{ for all } h\in H\}.$$ 
If $V_\lambda\neq 0$, we call $V_\lambda$ a weight space and call $\lambda$ a weight of $V$. Let $\Pi(V)$ be the set of all weights of $V$. It is known that the Weyl group $W$ permutes $\Pi(V)$ and $\dim V_\mu=\dim V_{\sigma \mu}$ for all $\sigma \in W$.
Note that $W$ naturally acts on $\zz[\Lambda]$. Let $\zz[\Lambda]^W$ be the $W$-invariant subring. By \cite[Theorem 23.24]{fulton2013representation}, the map
$\mathrm{ch}:R(L)\to \zz[\Lambda]^W$
is a ring isomorphism.

In \cite{geng2022characteristic}, for each representation $\phi$ of $L$, the \textsl{linearization} $\tilde{p}_\phi$ of $p_\phi$ is defined by
$$\tilde{p}_\phi(z_0, z_1, \dots, z_\ell):=p_\phi (z_0, z_1, \dots, z_\ell, 0,0, \dots 0).$$
The set of all linearizations is denoted by
$$\widetilde{\mathbf{CP}}_L=\{\tilde{p}_\phi\mid \phi\text{ is a representation of } L\}.$$

\begin{prop}(\cite[Proposition 3.2]{geng2022characteristic})\label{rho is bijective}
    The map 
\begin{align*}
    \rho:\mathbf{CP}_L &\to \widetilde{\mathbf{CP}}_L\\
    \rho(p_\phi)&=\tilde{p}_\phi
\end{align*}
is bijective.
\end{prop}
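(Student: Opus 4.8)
The map $\rho$ is manifestly well defined, since $\tilde{p}_\phi$ is obtained from $p_\phi$ merely by setting the variables $z_\alpha$, $\alpha\in\Phi$, equal to $0$, so it depends on $p_\phi$ alone; and it is surjective by the very definition of $\widetilde{\mathbf{CP}}_L$. Thus the only substantive point is injectivity: the linearization $\tilde{p}_\phi$ must determine $p_\phi$. The plan is to route this through the two bijections already recorded above, namely $p:R(L)\to\mathbf{CP}_L$ from \cite[Theorem 3.3]{geng2022characteristic} and the character isomorphism $\mathrm{ch}:R(L)\to\zz[\Lambda]^W$, by showing that $\tilde{p}_\phi$ secretly encodes the character of $\phi$.

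First I would compute $\tilde{p}_\phi$ explicitly. Because $H$ is a Cartan subalgebra, the commuting operators $\phi(h_1),\dots,\phi(h_\ell)$ are simultaneously diagonalizable, with common eigenspace decomposition $V=\bigoplus_{\lambda\in\Pi(V)}V_\lambda$, and $\phi(h_i)$ acts on $V_\lambda$ as the scalar $\lambda(h_i)$. Since the operator $z_0 I+\sum_{i=1}^\ell z_i\phi(h_i)$ preserves each $V_\lambda$ and acts there as $\bigl(z_0+\sum_{i=1}^\ell z_i\lambda(h_i)\bigr)\mathrm{id}$, taking determinants block by block gives
\[
\tilde{p}_\phi(z_0,\dots,z_\ell)=\det\Bigl(z_0 I+\sum_{i=1}^\ell z_i\phi(h_i)\Bigr)=\prod_{\lambda\in\Pi(V)}\Bigl(z_0+\sum_{i=1}^\ell z_i\lambda(h_i)\Bigr)^{\dim V_\lambda}.
\]

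Next I would extract the character from this factorization. Since $\{h_1,\dots,h_\ell\}$ is a basis of $H$, the assignment $\lambda\mapsto\ell_\lambda:=z_0+\sum_{i=1}^\ell z_i\lambda(h_i)$ is injective, and two distinct $\ell_\lambda$ are pairwise non-associate in $\ff[z_0,\dots,z_\ell]$ because they share the coefficient $1$ on $z_0$. As each $\ell_\lambda$ has degree $1$ and is therefore irreducible, the displayed identity is the prime factorization of $\tilde{p}_\phi$; by unique factorization the multiplicity of $\ell_\lambda$ in $\tilde{p}_\phi$ equals $\dim V_\lambda$ for every $\lambda$, so $\tilde{p}_\phi$ determines the function $\lambda\mapsto\dim V_\lambda$, i.e. $\mathrm{ch}([V])$. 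Hence, if $\tilde{p}_\phi=\tilde{p}_\psi$ with $\phi$ acting on $V$ and $\psi$ on $W$, then $\mathrm{ch}([V])=\mathrm{ch}([W])$; injectivity of $\mathrm{ch}$ gives $[V]=[W]$, and bijectivity of $p$ then gives $p_\phi=p_V=p_W=p_\psi$, which is precisely the injectivity of $\rho$.

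I do not anticipate a real obstacle. The two points needing a little care are the simultaneous diagonalizability of the $\phi(h_i)$ — this is just the weight-space decomposition for finite-dimensional representations of a semisimple Lie algebra — and the fact that the linear forms $\ell_\lambda$ are pairwise non-associate for distinct weights, which is what lets one invoke unique factorization cleanly. The genuine conceptual point, rather than any hard computation, is the observation that the linearization already carries the entire character; once that is in hand, the previously established bijections between $\mathbf{CP}_L$, $R(L)$ and $\zz[\Lambda]^W$ finish the job.
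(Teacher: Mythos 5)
Your proof is correct. Note that the paper does not prove this proposition at all---it is quoted from \cite[Proposition 3.2]{geng2022characteristic}---so there is no in-paper argument to compare against; but your route (block-diagonalize over the weight spaces to get $\tilde{p}_\phi=\prod_{\lambda}\bigl(z_0+\sum_i\lambda(h_i)z_i\bigr)^{\dim V_\lambda}$, observe that distinct weights give pairwise non-associate degree-one factors normalized by the coefficient $1$ on $z_0$, and use unique factorization to recover $\mathrm{ch}([V])$, then injectivity of $\mathrm{ch}$ and the map $p$) is exactly the character-theoretic mechanism the paper itself deploys in Theorem \ref{main thm 1} and the commuting square $\varphi=\rho\circ p\circ\mathrm{ch}^{-1}$ of the subsequent remark, so your argument essentially reconstructs the intended proof. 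One cosmetic point: in the last step you only need that $p$ is well defined on isomorphism classes (isomorphic representations have equal characteristic polynomials), not its bijectivity; also, strictly speaking surjectivity plus your injectivity argument presupposes $\rho$ is well defined, which you correctly dispatch by noting that the linearization is a substitution applied to the polynomial $p_\phi$ itself.
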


\begin{ex}
    Consider $L=\mathfrak{sl}_2$. The explicit map $\rho$ is given by
\begin{align*}
\rho(p_\phi)=
    \begin{cases}
        z_0\displaystyle\prod_{i=1}^{m/2}(z_0^2 - (2i)^2 z_1^2), &\quad \text{ if $m$ is even,}\vspace{.5 cm}\\ 
        \displaystyle\prod_{i=0}^{(m-1)/2}(z_0^2 -(2i+1)^2 z_1^2), &\quad \text{ if $m$ is odd,}
    \end{cases}
\end{align*}
    where $\phi$ is an irreducible representation of $L$ of highest weight $m$.
\end{ex}

More general, since $h_i\cdot v=\lambda(h_i)v$ for each weight vector $v\in V_\lambda$, $\phi(h_i)$ is a diagonal matrix with respect to a weight basis of $V$. Hence
\begin{align*}
    \tilde{p}_\phi(z_0, z_1, \dots, z_\ell)&=\det\left( z_0 I + \sum_{i=1}^\ell z_i\phi(h_i)\right)
    =\prod_{\lambda\in \Pi(\phi)}\left(z_0 + \sum_{i=1}^\ell \lambda(h_i)z_i \right)^{\dim V_\lambda}.
\end{align*}
Since the linearization preserves the multiplication, we have
$$\tilde{p}_U\cdot \tilde{p}_V=\tilde{p}_{U\oplus V}$$
for any representations $U$ and $V$ of $L$.

\begin{definition}
    Let $U$ and $V$ be representations of $L$. The \textsl{resolution product} $\tilde{p}_U * \tilde{p}_V$ of $\tilde{p}_U$ and $\tilde{p}_V$ is the polynomial defined by
    $$\tilde{p}_U* \tilde{p}_V=\prod_{\lambda\in \Pi(U), \mu\in \Pi(V)}\left(z_0 + \sum_{i=1}^\ell (\lambda+\mu)(h_i)z_i \right)^{\dim V_\lambda\cdot\:\dim V_\mu}.$$
\end{definition}

\begin{prop}(\cite[Proposition 4.2]{geng2022characteristic})\label{* is closed}
    Let $U$ and $V$ be any representations of $L$. Then
    $$\tilde{p}_U * \tilde{p}_V=\tilde{p}_{U\otimes V}.$$
\end{prop}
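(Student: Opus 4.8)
The statement to prove is $\tilde{p}_U * \tilde{p}_V = \tilde{p}_{U\otimes V}$, and the key observation is that the left-hand side is, essentially by definition, a product over pairs of weights $(\lambda,\mu)$ with $\lambda\in\Pi(U)$, $\mu\in\Pi(V)$, while the right-hand side is a product over the weights of $U\otimes V$. So the whole proof reduces to the standard fact about weights of a tensor product: if $v\in V_\lambda$ is a weight vector of $U$ (I'll be careful with the clash of letters — the paper writes $\dim V_\lambda$ for $\dim U_\lambda$ when $U$ is the representation in question) and $w$ is a weight vector of $V$ of weight $\mu$, then $v\otimes w$ lies in the $(\lambda+\mu)$-weight space of $U\otimes V$, because for $h\in H$ we have $h\cdot(v\otimes w) = (h\cdot v)\otimes w + v\otimes(h\cdot w) = (\lambda(h)+\mu(h))(v\otimes w)$. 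Hence, fixing weight bases of $U$ and $V$, the tensor products of basis vectors form a weight basis of $U\otimes V$, and the multiplicity of a weight $\nu$ in $U\otimes V$ is $\sum_{\lambda+\mu=\nu}\dim U_\lambda\cdot\dim V_\mu$.

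\medskip

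First I would recall, from the displayed formula just above the Definition, that for any representation $\phi$ of $L$ one has
\[
\tilde{p}_\phi(z_0,z_1,\dots,z_\ell)=\prod_{\nu\in\Pi(\phi)}\Bigl(z_0+\sum_{i=1}^\ell \nu(h_i)z_i\Bigr)^{\dim V_\nu},
\]
so that $\tilde{p}_\phi$ is determined by the multiset of weights of $\phi$ counted with multiplicity. Applying this with $\phi = U\otimes V$ gives
\[
\tilde{p}_{U\otimes V}=\prod_{\nu\in\Pi(U\otimes V)}\Bigl(z_0+\sum_{i=1}^\ell \nu(h_i)z_i\Bigr)^{\dim (U\otimes V)_\nu}.
\]
Then I would invoke the weight-space computation above: the map $(v,w)\mapsto v\otimes w$ sends $U_\lambda\times V_\mu$ into $(U\otimes V)_{\lambda+\mu}$, and since a basis of $U$ consisting of weight vectors tensored with such a basis of $V$ is a basis of $U\otimes V$, we get the identity of multisets
\[
\coprod_{\nu}\,\{\nu \text{ with multiplicity } \dim(U\otimes V)_\nu\}\;=\;\coprod_{\lambda\in\Pi(U),\,\mu\in\Pi(V)}\{\lambda+\mu \text{ with multiplicity } \dim U_\lambda\cdot\dim V_\mu\}.
\]
Equivalently $\dim(U\otimes V)_\nu=\sum_{\lambda+\mu=\nu}\dim U_\lambda\dim V_\mu$ for every $\nu\in\Lambda$.

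\medskip

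Finally I would substitute this into the product formula for $\tilde{p}_{U\otimes V}$, regroup the factors indexed by $\nu$ into factors indexed by pairs $(\lambda,\mu)$ — using that $\bigl(z_0+\sum_i\nu(h_i)z_i\bigr)^{\dim(U\otimes V)_\nu}=\prod_{\lambda+\mu=\nu}\bigl(z_0+\sum_i(\lambda+\mu)(h_i)z_i\bigr)^{\dim U_\lambda\dim V_\mu}$ and that exponents add — and observe that the result is exactly
\[
\prod_{\lambda\in\Pi(U),\,\mu\in\Pi(V)}\Bigl(z_0+\sum_{i=1}^\ell(\lambda+\mu)(h_i)z_i\Bigr)^{\dim U_\lambda\cdot\dim V_\mu}=\tilde{p}_U*\tilde{p}_V,
\]
which is the definition of the resolution product. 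This completes the proof. There is no serious obstacle here: the only thing to handle with a little care is the bookkeeping of multiplicities (making sure the grouping of factors by $\nu$ versus by pairs $(\lambda,\mu)$ matches up, and that weights appearing in $U$ or $V$ but summing to something outside $\Pi(U\otimes V)$ contribute a trivial — i.e. absent — factor, which is automatic since then the corresponding multiplicity is zero). Alternatively, one could deduce the multiplicity identity abstractly from the fact that $\mathrm{ch}$ is a ring homomorphism, i.e. $\mathrm{ch}([U\otimes V])=\mathrm{ch}([U])\cdot\mathrm{ch}([V])$ in $\zz[\Lambda]$, which repackages exactly the same combinatorics; I would mention this as the conceptual reason but carry out the direct argument for self-containedness.
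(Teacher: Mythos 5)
Your proof is correct. The paper does not actually prove this proposition --- it is imported verbatim from \cite[Proposition 4.2]{geng2022characteristic} --- and your argument is the standard (and expected) one: $h\cdot(v\otimes w)=(h\cdot v)\otimes w+v\otimes(h\cdot w)$ gives $\dim(U\otimes V)_\nu=\sum_{\lambda+\mu=\nu}\dim U_\lambda\dim V_\mu$, and regrouping the linear factors of $\tilde{p}_{U\otimes V}$ by pairs $(\lambda,\mu)$ rather than by $\nu$ yields exactly the defining product of $\tilde{p}_U*\tilde{p}_V$; you also correctly repair the paper's sloppy exponent $\dim V_\lambda\cdot\dim V_\mu$ to the intended $\dim U_\lambda\cdot\dim V_\mu$.
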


Hence, one can define the resolution product $p_U * p_V$ of $p_U, p_V\in \mathbf{CP}_L$ as the polynomial
$$p_U* p_V=\rho^{-1}(\tilde{p}_U* \tilde{p}_V)$$
which is well-defined as $\rho$ is bijective. It follows that
\begin{align}\label{* in CP}
    p_U *p_V=p_{U\otimes V}.
\end{align}

\begin{prop}
    $(\widetilde{\mathbf{CP}}_L,\cdot, *)$ and $(\mathbf{CP}_L,\cdot, *)$ are commutative rings with the identity $z_0$. Moreover, $\rho$ is a ring isomorphism.
\end{prop}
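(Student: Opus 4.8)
The plan is to deduce everything by \emph{transport of structure} along bijections that are already in hand, so that no ring axiom needs to be checked directly on polynomials. Recall from the excerpt that $R(L)$ is a commutative ring whose multiplicative identity is the class $[\ff]$ of the trivial one-dimensional representation and whose additive identity is the class $[0]$ of the zero representation, and that $p\colon R(L)\to\mathbf{CP}_L$, $[V]\mapsto p_V$, is a bijection by \cite[Theorem 3.3]{geng2022characteristic}.

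First I would note that $p$ intertwines the operations: by \eqref{. in CP} one has $p([U]+[V])=p_{U\oplus V}=p_U\cdot p_V=p([U])\cdot p([V])$, and by \eqref{* in CP} one has $p([U]\cdot[V])=p_{U\otimes V}=p_U * p_V=p([U]) * p([V])$. So $p$ takes the ring addition of $R(L)$ to ordinary polynomial multiplication $\cdot$ and the ring multiplication of $R(L)$ to the resolution product $*$; being bijective, it then forces $(\mathbf{CP}_L,\cdot,*)$ to be a commutative ring and $p$ to be a ring isomorphism. Its multiplicative identity is $p([\ff])=\det(z_0)=z_0$ and its additive identity is $p([0])=1$ (the determinant of the empty $0\times 0$ matrix). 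The only axiom that is not visibly a polynomial identity, namely distributivity $p_U *(p_V\cdot p_W)=(p_U * p_V)\cdot(p_U * p_W)$, is simply the isomorphism $U\otimes(V\oplus W)\cong(U\otimes V)\oplus(U\otimes W)$ carried through $p$.

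For $\widetilde{\mathbf{CP}}_L$ I would repeat this with the bijection $\rho\circ p\colon R(L)\to\widetilde{\mathbf{CP}}_L$, a composite of bijections by \cite[Theorem 3.3]{geng2022characteristic} and Proposition~\ref{rho is bijective}. Compatibility with the operations now uses the identity $\tilde p_U\cdot\tilde p_V=\tilde p_{U\oplus V}$ recorded in the text (linearization preserves products) and Proposition~\ref{* is closed}: indeed $(\rho\circ p)([U]+[V])=\tilde p_{U\oplus V}=\tilde p_U\cdot\tilde p_V$ and $(\rho\circ p)([U]\cdot[V])=\tilde p_{U\otimes V}=\tilde p_U * \tilde p_V$. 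Hence $(\widetilde{\mathbf{CP}}_L,\cdot,*)$ is a commutative ring with multiplicative identity $(\rho\circ p)([\ff])=\tilde p_{\ff}=z_0$, the last equality because the trivial representation has the single weight $0$.

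Finally, $\rho$ itself is a ring isomorphism: it is bijective by Proposition~\ref{rho is bijective}; from $\rho(p_\phi)=\tilde p_\phi$ and \eqref{. in CP} we get $\rho(p_U\cdot p_V)=\rho(p_{U\oplus V})=\tilde p_{U\oplus V}=\tilde p_U\cdot\tilde p_V=\rho(p_U)\cdot\rho(p_V)$, while $\rho(p_U * p_V)=\tilde p_U * \tilde p_V=\rho(p_U) * \rho(p_V)$ is immediate from the definition $p_U * p_V=\rho^{-1}(\tilde p_U * \tilde p_V)$; and $\rho(z_0)=z_0$. (Equivalently, $\rho=(\rho\circ p)\circ p^{-1}$ is a composite of the two ring isomorphisms just built.) I do not expect a genuine obstacle: all the content lives in \cite[Theorem 3.3]{geng2022characteristic}, Propositions~\ref{rho is bijective} and~\ref{* is closed}, and the compatibility identities \eqref{. in CP}, \eqref{* in CP}, $\tilde p_U\cdot\tilde p_V=\tilde p_{U\oplus V}$. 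The one point to be careful about is bookkeeping — the first slot $\cdot$ is the ring \emph{addition} and the resolution product $*$ is the ring \emph{multiplication}, so $z_0$ is the $*$-identity, not the $\cdot$-identity.
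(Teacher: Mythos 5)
Your argument is correct relative to the paper's conventions, but it follows a genuinely different route from the paper's own proof. The paper verifies the structure directly on the polynomial side: it observes that $\widetilde{\mathbf{CP}}_L$ is closed under $\cdot$ with identity $1$, is a commutative monoid under $*$ with identity $z_0$ (using Proposition~\ref{* is closed} and $\tilde p_U * z_0=\tilde p_U$), proves distributivity from $(U\oplus V)\otimes W\cong(U\otimes W)\oplus(V\otimes W)$, and only then checks that $\rho$ is a homomorphism; the statement that $p\colon R(L)\to\mathbf{CP}_L$ is a ring isomorphism is deduced afterwards as a corollary. You invert this order: you transport the ring structure of $R(L)$ along the bijections $p$ and $\rho\circ p$, using exactly the compatibilities \eqref{. in CP}, \eqref{* in CP}, $\tilde p_U\cdot\tilde p_V=\tilde p_{U\oplus V}$ and Proposition~\ref{* is closed}, so that every axiom is inherited at once and the paper's subsequent corollary about $p$ falls out as a byproduct rather than a separate step. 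What this buys is economy — no axiom is checked on polynomials, and your identification of the identities $p([0])=1$ and $p([\ff])=z_0$, together with the warning that $\cdot$ is the ring addition and $*$ the ring multiplication, is exactly right; what it costs is that everything now rests on $R(L)$ carrying a genuine commutative ring structure, which the paper asserts (via $\mathrm{ch}\colon R(L)\to\zz[\Lambda]^W$) but which, for honest isomorphism classes of representations, is strictly a semiring without additive inverses. This caveat is not a gap you introduced: the paper's own claim that $\widetilde{\mathbf{CP}}_L$ is an ``abelian group'' under $\cdot$ glosses over the same Grothendieck-completion point, so your proof and the paper's are on equal footing there.
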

\begin{proof}
It is obvious that $\widetilde{\mathbf{CP}}_L$ is an abelian group under the usual multiplication $\cdot$ and the identity is $1$ which corresponds to the zero space. Note that the trivial representation has the characteristic polynomial $z_0$. It is clear from the definition of $*$ that $\tilde{p}_U*z_0=\tilde{p}_U$ for any representation $U$. By Proposition \ref{* is closed}, $\widetilde{\mathbf{CP}}_L$ is a commutative monoid under $*$ with the identity $z_0$. Thus, only the distributive law of $*$ over $\cdot$ is left to be shown. Let $p_U, p_V, p_W \in \widetilde{\mathbf{CP}}_L$. Then
\begin{align*}
    (\tilde{p}_U\cdot \tilde{p}_V)*\tilde{p}_W &= \tilde{p}_{U\oplus V}* \tilde{p}_W\\
    &=\tilde{p}_{(U\oplus V) \otimes W}\\
    &=\tilde{p}_{(U\otimes W)\oplus (V\otimes W)}\\
    &=\tilde{p}_{U\otimes W}\cdot \tilde{p}_{V\otimes W}\\
    &=(\tilde{p}_{U}*\tilde{p}_{W})\cdot(\tilde{p}_{V}*\tilde{p}_{W}),
\end{align*}
as desired. The argument is the same for $\mathbf{CP}_L$. By Proposition \ref{rho is bijective}, $\rho$ is bijective. It remains to show that $\rho$ is a ring homomorphism. In fact,
    $$\rho(p_U\cdot p_V)=\rho(p_{U\oplus V})=\tilde{p}_{U\oplus V}=\tilde{p}_U\cdot \tilde{p}_V=\rho(p_U)\cdot \rho(p_V)$$
    and, by the definition of $*$ on $\mathbf{CP}_L$,
    $$\rho(p_U*p_V)=\rho(\rho^{-1}(\tilde{p}_U* \tilde{p}_V))=\tilde{p}_U* \tilde{p}_V=\rho(p_U)*\rho(p_V).$$
    This completes the proof.
\end{proof}

\begin{cor}
    The map $p:R(L)\to \mathbf{CP}_L$ is a ring isomorphism.
\end{cor}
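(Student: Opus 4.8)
The plan is to reduce the statement to facts already established. By \cite[Theorem 3.3]{geng2022characteristic} the map $p$ is a bijection, and the preceding proposition equips $(\mathbf{CP}_L,\cdot,*)$ with the structure of a commutative ring with identity $z_0$; so the only thing left to verify is that $p$ carries the ring operations of $R(L)$ to those of $\mathbf{CP}_L$, after which bijectivity upgrades ``homomorphism'' to ``isomorphism''.

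First I would check compatibility with the two products. Using the definition $[U]+[V]=[U\oplus V]$ in $R(L)$ together with \eqref{. in CP},
$$p([U]+[V]) = p([U\oplus V]) = p_{U\oplus V} = p_U\cdot p_V = p([U])\cdot p([V]),$$
and using $[U]\cdot[V]=[U\otimes V]$ together with \eqref{* in CP},
$$p([U]\cdot[V]) = p([U\otimes V]) = p_{U\otimes V} = p_U * p_V = p([U]) * p([V]).$$
Finally $p$ sends the unit of $R(L)$, the class of the trivial $1$-dimensional representation, to $z_0$, which is the $*$-identity of $\mathbf{CP}_L$. Hence $p$ is a bijective ring homomorphism, i.e. a ring isomorphism.

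The one point that deserves care is that $R(L)$ is the Grothendieck ring, so $p$ must be well defined and additive on formal differences $[U]-[V]$, not merely on genuine classes $[V]$. This is precisely the content of \cite[Theorem 3.3]{geng2022characteristic}, which rests on complete reducibility of finite-dimensional $L$-modules: if $[U]-[V]=[U']-[V']$ then $U\oplus V'\cong U'\oplus V$, hence $p_U\cdot p_{V'}=p_{U'}\cdot p_V$, so the assignment descends. Alternatively one can route the argument through characters, observing that $\rho\circ p$ is, after the evident identification of $\widetilde{\mathbf{CP}}_L$ with (a localization of) $\zz[\Lambda]^W$, the character isomorphism $\mathrm{ch}\colon R(L)\to\zz[\Lambda]^W$ of \cite[Theorem 23.24]{fulton2013representation}; since $\rho$ is a ring isomorphism by the preceding proposition, so is $p$. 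Either way the genuine mathematical content has already been absorbed into the quoted results, leaving only the short diagram chase above; I do not foresee a serious obstacle.
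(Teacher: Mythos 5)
Your argument is correct and is essentially the paper's own proof: the corollary follows from the bijectivity of $p$ (quoted from \cite[Theorem 3.3]{geng2022characteristic}) together with the identities (\ref{. in CP}) and (\ref{* in CP}). The extra remarks about well-definedness on formal differences and the alternative route via characters are fine but go beyond what the paper records, which treats those points as already absorbed into the cited results.
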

\begin{proof}
    This follows from (\ref{. in CP}), (\ref{* in CP}) and the fact that $p$ is bijective.
\end{proof}

Let $X(\Lambda)$ be the set of polynomials of the form
$$\prod_{i \in I}(z_0+a_{i,1} z_1+\dots+ a_{i,n} z_n)^{i}$$
where $I\subset \nn\cup\{0\}$ is a finite index set and $a_{i,1},\dots, a_{i,n}\in \zz$. Since each weight $\lambda\in\Lambda$ is uniquely determined by its integral values $\lambda(h_i), i=1,\dots, n$, we have that $\Lambda$ and $\zz^n$ are one-to-one correspondence. Hence elements of $X(\Lambda)$ can be written as a finite product
\begin{align}\label{element in X^W}
    \prod_{\lambda\in \Lambda}(z_0+\lambda(h_1) z_1+\dots+ \lambda(h_n) z_n)^{i_\lambda}
\end{align}
where all $i_\lambda$ but finitely many are zeros. This justifies the notation $X(\Lambda)$. Define the action of $W$ on $X$ by
$$w\cdot \prod_{\lambda\in \Lambda}(z_0+\lambda(h_1) z_1+\dots+ \lambda(h_n) z_n)^{i_\lambda}= \prod_{\lambda\in \Lambda}(z_0+w\lambda(h_1) z_1+\dots+ w\lambda(h_n) z_n)^{i_{w\lambda}}$$
for any $w\in W$. The following theorem is one of the main results of this paper.


\medskip

\begin{theorem}\label{main thm 1}
    We have
    $$\widetilde{\mathbf{CP}}_L=X(\Lambda)^W.$$
\end{theorem}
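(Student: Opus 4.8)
The plan is to prove the two inclusions $\widetilde{\mathbf{CP}}_L\subseteq X(\Lambda)^W$ and $X(\Lambda)^W\subseteq\widetilde{\mathbf{CP}}_L$ separately, the common bridge being the ``exponential dictionary'' $q_\lambda\leftrightarrow e^\lambda$, where $q_\lambda:=z_0+\sum_{i=1}^{\ell}\lambda(h_i)z_i$, together with the character isomorphism $\mathrm{ch}\colon R(L)\to\zz[\Lambda]^W$ recalled above. First I would record the bookkeeping that makes this dictionary legitimate. Since $\ff[z_0,\dots,z_\ell]$ is a unique factorization domain and the linear forms $q_\lambda$ are pairwise non-associate irreducibles — they are distinct because $\lambda\mapsto(\lambda(h_1),\dots,\lambda(h_\ell))$ embeds $\Lambda$ into $\zz^\ell$, and all of them share the coefficient $1$ on $z_0$ — every element of $X(\Lambda)$ has a \emph{unique} presentation $\prod_{\lambda\in\Lambda}q_\lambda^{\,i_\lambda}$ with $(i_\lambda)$ a finitely supported function $\Lambda\to\nn\cup\{0\}$. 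Hence $\prod_\lambda q_\lambda^{\,i_\lambda}\mapsto(i_\lambda)_\lambda$ is a well-defined bijection from $X(\Lambda)$ onto the monoid of such functions, and unwinding the $W$-action on $X(\Lambda)$ it carries $X(\Lambda)^W$ onto the $W$-invariant functions.

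For $\widetilde{\mathbf{CP}}_L\subseteq X(\Lambda)^W$ I would simply appeal to the formula obtained in the text,
\[\tilde p_\phi=\prod_{\lambda\in\Pi(\phi)}q_\lambda^{\,\dim V_\lambda},\]
and check the three requirements for membership in $X(\Lambda)^W$: the exponents $\dim V_\lambda$ are nonnegative integers, all but finitely many zero as $V$ is finite dimensional; each $q_\lambda$ has integer coefficients, since $\lambda(h_i)=\langle\lambda,\alpha_i^\vee\rangle\in\zz$ for a weight $\lambda$ and a simple coroot $h_i$ in the fixed canonical basis; and the multiplicity function $\lambda\mapsto\dim V_\lambda$ is $W$-invariant, because $W$ permutes $\Pi(\phi)$ with $\dim V_\lambda=\dim V_{\sigma\lambda}$. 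By the bijection of the first paragraph this is precisely $\tilde p_\phi\in X(\Lambda)^W$.

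The substance of the theorem is the reverse inclusion $X(\Lambda)^W\subseteq\widetilde{\mathbf{CP}}_L$, where the representation theory enters. Given $f\in X(\Lambda)^W$, write $f=\prod_\lambda q_\lambda^{\,i_\lambda}$ as above, so $(i_\lambda)$ is finitely supported, nonnegative and $W$-invariant, and set $\theta:=\sum_{\lambda\in\Lambda}i_\lambda e^\lambda\in\zz[\Lambda]$. Its $W$-invariance is exactly that of $(i_\lambda)$, so $\theta\in\zz[\Lambda]^W$. Since $\mathrm{ch}\colon R(L)\to\zz[\Lambda]^W$ is surjective, $\theta$ is the character of a class in $R(L)$; and because all $i_\lambda\ge 0$, one realizes that class by an honest finite-dimensional representation $\phi$ on a space $V$ with $\dim V_\lambda=i_\lambda$ for every $\lambda$. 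Substituting into the weight formula gives $\tilde p_\phi=\prod_\lambda q_\lambda^{\,i_\lambda}=f$, so $f\in\widetilde{\mathbf{CP}}_L$.

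I expect the one genuinely delicate step to be the clause ``$\theta$ is the character of a class in $R(L)$, hence of an honest representation with $\dim V_\lambda=i_\lambda$''. Surjectivity of $\mathrm{ch}$ produces a priori only a \emph{virtual} character — a $\zz$-combination of genuine representations — and one must use the nonnegativity of every weight multiplicity $i_\lambda$ to promote this to a bona fide representation realizing those multiplicities exactly; this is where one has to invoke the precise relationship between $R(L)$ and $\zz[\Lambda]^W$ rather than mere formalities. Everything else — uniqueness of the factorization in $X(\Lambda)$, integrality and finite support of the exponents, and the $W$-equivariance of the dictionary $q_\lambda\leftrightarrow e^\lambda$ on both sides — is routine once the correspondence of the first paragraph is in place.
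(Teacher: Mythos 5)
Your proposal follows the same route as the paper's proof: the inclusion $\widetilde{\mathbf{CP}}_L\subseteq X(\Lambda)^W$ via the weight factorization of $\tilde p_\phi$, and the converse by passing from $f=\prod_\lambda q_\lambda^{\,i_\lambda}$ to the formal sum $\theta=\sum_\lambda i_\lambda e^\lambda\in\zz[\Lambda]^W$ and pulling back along $\mathrm{ch}$. The first inclusion and your UFD bookkeeping are fine. But the step you flag and defer --- ``since all $i_\lambda\ge 0$, the class $\mathrm{ch}^{-1}(\theta)$ is realized by an honest representation with $\dim V_\lambda=i_\lambda$'' --- is a genuine gap, and in fact it cannot be filled: $\mathrm{ch}$ is an isomorphism onto $\zz[\Lambda]^W$ only at the level of the Grothendieck ring, and a $W$-invariant element with nonnegative coefficients need not be the character of any actual representation. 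Concretely, for $L=\mathfrak{sl}_2$ take $\theta=e^{2}+e^{-2}$: it lies in $\zz[\Lambda]^W$ and has nonnegative coefficients, but $\mathrm{ch}^{-1}(\theta)=[V(2)]-[V(0)]$ is only virtual, since any genuine representation having $2$ as a maximal weight contains $V(2)$ and hence also the weight $0$. Correspondingly $(z_0+2z_1)(z_0-2z_1)\in X(\Lambda)^W$, yet it is not $\tilde p_\phi$ for any representation $\phi$: its degree forces $\dim V=2$, and the only two-dimensional representations give $(z_0+z_1)(z_0-z_1)$ and $z_0^2$.

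So the inclusion $X(\Lambda)^W\subseteq\widetilde{\mathbf{CP}}_L$ does not follow from nonnegativity of the exponents, and no alternative argument can rescue it as stated; the example above shows the asserted equality fails whenever some $W$-invariant nonnegative multiplicity function is not a sum of irreducible characters (which happens for every non-minuscule dominant weight). To be fair, the paper's own proof makes exactly the same silent leap when it declares that the formal sum $\sum_\lambda i_\lambda e^\lambda$ ``corresponds to a representation'' under $\mathrm{ch}$, so your proposal reproduces the paper's argument, gap included --- with the difference that you explicitly identified the weak point. What your dictionary actually shows is that $\widetilde{\mathbf{CP}}_L$ is the image of the sub-semiring of genuine characters, i.e.\ the set of products $\prod_\lambda q_\lambda^{\,m_\lambda}$ for which $\sum_\lambda m_\lambda e^\lambda$ is a nonnegative integral combination of the $\mathrm{ch}(V(\mu))$; closing the argument would require either restricting $X(\Lambda)^W$ to that subset or proving (falsely, in general) that it is all of $X(\Lambda)^W$.
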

\begin{proof}
    For any $\tilde{p}_\phi=\prod_{\lambda\in \Pi(\phi)} \left(z_0+\sum_{i=1}^n \lambda(h_i)z_i\right)^{\dim V_\lambda}\in\widetilde{\mathrm{CP}}_L$, since $W$ acts on $\Pi(\phi)$ and $\dim V_\lambda=\dim V_{w\lambda}$ for all $w\in W$, $\tilde{p}_\phi\in X^W$. To prove the converse, let $f\in X^W$ be a polynomial of the form (\ref{element in X^W}). Then the formal sum $\sum_{\lambda\in \Lambda}i_\lambda e^\lambda$ is also an element of $\zz[\Lambda]^W$. Let $\phi':L\to \gl(V)$ be the unique (up to isomorphism) representation of $L$ corresponding to this formal sum under the character map $\mathrm{ch}$. Then nonzero $i_\lambda$'s are exactly the weights of $\phi'$ and $\dim V_\lambda=i_\lambda$. By definition, $\tilde{p}_\lambda=f$ and so $f\in \widetilde{\mathbf{CP}}_L$.
\end{proof}

\begin{rmk}
Let $\varphi=\rho\circ p \circ \mathrm{ch}^{-1}:\zz[\Lambda]^W\to \widetilde{\mathbf{CP}_L}$. By Theorem \ref{main thm 1}, $\varphi$ can be expressed explicitly as
$$\varphi\left(\sum_{\lambda}n_\lambda e^\lambda\right)=\prod_\lambda \left(z_0+\sum_{i=1}^\ell \lambda(h_i)z_i\right)^{n_\lambda}.$$
In particular, $\varphi$ is a ring isomorphism. It immediately follows from the definition of  $\varphi$ that the diagram
\[
    \begin{tikzcd}
	{R(L)} &&& {\mathbf{CP}_L} \\
	\\
	{\mathbb{Z}[\Lambda]^W} &&& {X(\Lambda)^W}
	\arrow["{\mathrm{ch}}"', from=1-1, to=3-1]
	\arrow["p", from=1-1, to=1-4]
	\arrow["\rho", from=1-4, to=3-4]
	\arrow["\varphi", from=3-1, to=3-4]
    \end{tikzcd}
    \]
commutes.
\end{rmk}


    
\begin{ex}
    Consider $L=\mathfrak{sl}_n$. Fix a cartan subalgebra $H$ with a fixed basis $\{h_i=E_{i,i}-E_{i+1,i+1}: i=1,\dots, n-1\}$. For each $i=1,\dots, n$, let $\varepsilon_i: \mathfrak{sl}_n \to \mathbb{C}$ be given by $\varepsilon_i(A)=a_{ii}$ for $A=(a_{ij})\in \mathfrak{sl}_n$. For $i=1,\dots , n-1$, let
    $$\alpha_i=\varepsilon_i - \varepsilon_{i+1},$$
    $$\omega_i=\varepsilon_1+\varepsilon_2+\cdots +\varepsilon_i$$
    be simple roots and a fundamental weights of $\mathfrak{sl}_n$, respectively. Let $\Lambda^+$ be the set of dominant integral weights of $\mathfrak{sl}_n$ and $V(\lambda)$ the irreducible representation corresponding to $\lambda\in \Lambda^+$. One can associate each $$\lambda=a_1\omega_1+ a_2\omega_2+ \cdots + a_{n-1}\omega_{n-1}\in \Lambda^+ (a_i\in \mathbb{Z}_{\geq 0})$$ the partition $(\lambda_1 \geq  \lambda_2\geq \dots\geq \lambda_{n-1} \geq 0)$ where
    \begin{align*}
       \lambda_1 &= a_1+ a_2+ \cdots + a_{n-1}\\
       \lambda_2 &= a_2+ \cdots + a_{n-1}\\ 
       &\  \vdots\\
      \lambda_{n-1} &= a_{n-1}.
    \end{align*}
    In other words, $\lambda_i$ equals to the coefficient of $\varepsilon_i$. The character of $V(\lambda)$ is the schur polynomial $s_\lambda(x_1,\dots, x_{n-1})$ where $x_i=e^{\varepsilon_i}$. The weights of $V(\lambda)$ are 
    $$a_1\varepsilon_1 + a_2\varepsilon_2 + \cdots + a_{n-1}\varepsilon_{n-1}$$
    whose multiplicities is the coefficient of the monomial $x_1^{a_1}x_2^{a_2}\cdots x_{n-1}^{a_{n-1}}$ occurring in $s_\lambda(x_1,\dots, x_{n-1})$. In particular, the character of the fundamental representation $V(\omega_k)$ is the $k$-th elementary symmetric function
    $$\sum_{i_1<\cdots<i_k} x_{i_1}\cdots x_{i_k}=\sum_{i_1<\cdots<i_k} e^{\varepsilon_{i_1}+\cdots + \varepsilon_{i_k}}.$$
    Here, the multiplicity of each weight $\varepsilon_{i_1}+\cdots + \varepsilon_{i_k}$ is 1. Hence
    \begin{align*}
        \tilde{p}_{V(\omega_k)}&=\varphi\left(\sum_{i_1<\cdots<i_k} e^{\varepsilon_{i_1}+\cdots + \varepsilon_{i_k}}\right)\\
        &=\prod_{i_1<\cdots<i_k} \left(z_0+\sum_{i=1}^{n-1} (\varepsilon_{i_1}+\cdots + \varepsilon_{i_k})(h_i)z_i\right).\\
    \end{align*}
\end{ex}


\section{Nilpotent Lie algebras}

Assume that $V$ is an $n$-dimensional vector space over $\ff$. The following theorems are characterization of solvable Lie algebras via characteristic polynomials given by Hu and Zhang \cite{hu2019determinants}.

\begin{theorem}(\cite[Theorem 5.1]{hu2019determinants})\label{reducible1}
    Let $L$ be a subalgebra of $\mathfrak{gl}(V)$ with basis $\{e_1,e_2,\ldots,e_s\}$. Then $L$ is solvable if and only if the characteristic polynomial $p_L(\vz) := \det\left(z_0I+\sum_{i=1}^s z_ie_i\right)$ is completely reducible.
\end{theorem}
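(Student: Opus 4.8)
The plan is to prove the two implications of the equivalence separately. The forward direction is short; essentially all of the substance lies in the converse.

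\emph{Solvable implies completely reducible.} Here I would simply invoke Lie's theorem: since $L$ is a solvable Lie algebra acting on the nonzero space $V$ over the algebraically closed field $\ff$ of characteristic $0$, there is a basis of $V$ in which every matrix of $L$ is upper triangular. In that basis $z_0 I+\sum_{i=1}^{s}z_i e_i$ is upper triangular, so $p_L(\vz)$ is the product of its diagonal entries, $\prod_{k=1}^{\dim V}\bigl(z_0+\sum_{i=1}^{s}(e_i)_{kk}\,z_i\bigr)$, a product of linear forms. Since $p_L$ is independent of the chosen basis of $V$, it is completely reducible.

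\emph{Completely reducible implies solvable.} I would argue the contrapositive and exhibit an irreducible factor of $p_L$ of degree $2$ whenever $L$ is not solvable. If $L$ is not solvable then, by Levi's theorem, $L=\mathfrak{s}\ltimes\mathrm{rad}(L)$ with $\mathfrak{s}\neq 0$ semisimple, and a nonzero semisimple Lie algebra over $\ff$ contains an $\mathfrak{sl}_2$-triple (for instance a root $\mathfrak{sl}_2$, or one produced by Jacobson--Morozov); write $\mathfrak{s}'=\spann\{h',e',f'\}\subseteq L$ for the resulting $\mathfrak{sl}_2$-subalgebra. Extend $\{h',e',f'\}$ to a basis of $L$ beginning with these three vectors; then setting $z_4=\cdots=z_s=0$ in $p_L$ produces $\det\bigl(z_0 I+z_1 h'+z_2 e'+z_3 f'\bigr)$, which is the characteristic polynomial of $\mathfrak{s}'$ with respect to the basis $(h',e',f')$ computed on the $\mathfrak{s}'$-module $V|_{\mathfrak{s}'}$. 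Since the inclusion $\mathfrak{s}'\hookrightarrow\gl(V)$ is nonzero, $V|_{\mathfrak{s}'}$ is a nontrivial $\mathfrak{sl}_2$-module; decomposing $V|_{\mathfrak{s}'}\cong\bigoplus_i V(m_i)$ into irreducibles and using the multiplicativity $p_{U\oplus W}=p_U\cdot p_W$ from~(\ref{. in CP}), its characteristic polynomial equals $\prod_i p_{V(m_i)}$, with at least one $m_i\geq 1$.

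Now I would invoke the explicit $\mathfrak{sl}_2$ formula recalled in the introduction — it is a polynomial identity with rational coefficients, hence valid over $\ff$ — according to which, for $m\geq 1$, $p_{V(m)}$ has a factor of the form $z_0^{2}-k^{2}(z_1^{2}+z_2 z_3)$ with $k\neq 0$. The quadratic form $z_1^{2}+z_2 z_3$ has rank $3$, so it is not the square of a linear form; consequently $z_0^{2}-k^{2}(z_1^{2}+z_2 z_3)$ is not a product of two linear forms (that would force $z_1^{2}+z_2 z_3$ to be a perfect square), i.e.\ it is an irreducible quadratic in $\ff[z_0,z_1,z_2,z_3]$. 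Hence the specialization of $p_L$ at $z_4=\cdots=z_s=0$ has an irreducible factor of degree $2$. Since specializing variables to $0$ carries a product of linear forms to a product of linear forms, if $p_L$ were completely reducible so would be this specialization; this contradiction shows $p_L$ is not completely reducible. The main obstacle is this converse, and within it the crucial move is the reduction to $\mathfrak{sl}_2$: one must recognize that a non-solvable subalgebra of $\gl(V)$ contains an $\mathfrak{sl}_2$ acting nontrivially already on $V$ itself, after which the failure of complete reducibility is read off from the known $\mathfrak{sl}_2$ formula. The two remaining ingredients — that specialization preserves complete reducibility, and that $z_0^{2}-k^{2}(z_1^{2}+z_2 z_3)$ is irreducible — are routine once this reduction is in place.
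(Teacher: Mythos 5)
Your proof is correct. Note that the paper does not prove this statement at all: it is quoted from Hu and Zhang (\cite{hu2019determinants}, Theorem 5.1) and used as a black box (it enters the proof of Proposition \ref{nilpotent1}), so there is no in-paper argument to compare against; your write-up is a self-contained proof. The forward direction is the standard Lie's-theorem triangularization. The converse --- contrapositive via Levi decomposition, a root $\mathfrak{sl}_2$-triple $\{h',e',f'\}$ inside the semisimple part, restriction of $V$ to this $\mathfrak{sl}_2$, the explicit formula for $p_{V(m)}$, irreducibility of $z_0^2-k^2(z_1^2+z_2z_3)$ (a rank-$4$ quadric), and the specialization $z_4=\cdots=z_s=0$ --- is sound, and the reduction to $\mathfrak{sl}_2$ is indeed the crux. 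Two small points deserve an explicit sentence. First, the theorem is stated for a given basis $\{e_1,\dots,e_s\}$, whereas you pass to a basis of $L$ beginning with $h',e',f'$; this is licensed by the change-of-basis identity recorded in Section 2, $p_{\phi,\bbb_2}(\vz)=p_{\phi,\bbb_1}(\vz D)$ with $D$ invertible, which shows that complete reducibility does not depend on the chosen basis of $L$, but you should say so. Second, when you specialize a hypothetical factorization of $p_L$ into linear forms at $z_4=\cdots=z_s=0$, a linear factor could a priori specialize to $0$; this cannot happen here because the specialized polynomial $\det\left(z_0I+z_1h'+z_2e'+z_3f'\right)$ contains the monomial $z_0^{\dim V}$ and hence is nonzero, so the specialization really is a product of homogeneous linear forms, and the irreducible quadratic factor then contradicts unique factorization as you intend.
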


\begin{theorem}(\cite[Theorem 5.4]{hu2019determinants})\label{reducible2}
    Let $L$ be a finite-dimensional Lie algebra. Then $L$ is solvable if and only if the characteristic polynomial of $L$ is completely reducible with respect to any finite-dimensional representation and any basis.
\end{theorem}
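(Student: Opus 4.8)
The plan is to prove the two implications separately by standard arguments: the ``only if'' direction via Lie's theorem, and the ``if'' direction by reducing to Theorem \ref{reducible1} with the help of Ado's theorem.

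First I would treat the ``only if'' direction. Assume $L$ is solvable, fix an arbitrary finite-dimensional representation $\phi:L\to\gl(V)$ and an arbitrary basis $\bbb=\{e_1,\dots,e_s\}$ of $L$. Since $\phi(L)$ is a homomorphic image of $L$ it is solvable, and as $\ff$ is algebraically closed of characteristic $0$, Lie's theorem provides a basis of $V$ with respect to which every $\phi(e_i)$ is upper triangular. In that basis the matrix $z_0I+\sum_{i=1}^s z_i\phi(e_i)$ is upper triangular, so $p_{\phi,\bbb}(\vz)$ equals the product of its diagonal entries $\prod_{j}\left(z_0+\sum_{i=1}^s(\phi(e_i))_{jj}\,z_i\right)$, a product of linear forms; hence it is completely reducible. (Using a different basis of $V$ is harmless, since the characteristic polynomial is basis-independent as recalled in Section 2, and changing the basis of $L$ only produces a linear change of variables, which preserves complete reducibility.)

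Next I would do the ``if'' direction. Suppose $p_{\phi,\bbb}$ is completely reducible for every finite-dimensional representation $\phi$ and every basis $\bbb$. By Ado's theorem $L$ has a faithful finite-dimensional representation $\psi:L\hookrightarrow\gl(V)$. Pick a basis $\bbb=\{e_1,\dots,e_s\}$ of $L$, so that $\{\psi(e_1),\dots,\psi(e_s)\}$ is a basis of $\psi(L)\subseteq\gl(V)$; identifying $L$ with $\psi(L)$, the subalgebra characteristic polynomial $p_{\psi(L),\bbb}(\vz)=\det\left(z_0I+\sum_{i=1}^s z_i\psi(e_i)\right)$ coincides with $p_{\psi,\bbb}(\vz)$, which is completely reducible by hypothesis. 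Theorem \ref{reducible1} then yields that $\psi(L)$ is solvable, so $L\cong\psi(L)$ is solvable, as desired.

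The genuinely substantive ingredients are external: Theorem \ref{reducible1} (that is, \cite[Theorem 5.1]{hu2019determinants}) for the converse, and Ado's theorem to supply a faithful representation; the remainder is elementary manipulation of the determinant together with Lie's theorem. Thus the only real ``obstacle'' is that Theorem \ref{reducible1} is invoked as a black box — a self-contained treatment would have to reprove it, which is where the real work (Cartan's solvability criterion and the trace-form machinery) lies — but since we are permitted to use the earlier results, the argument above is complete.
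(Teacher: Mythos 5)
Your proof is correct. Note, though, that the paper does not prove this statement at all: it is quoted verbatim from \cite[Theorem 5.4]{hu2019determinants} and used as a black box, so there is no internal proof to compare against — your write-up is a reconstruction of the cited argument. Both directions are sound: Lie's theorem (valid here since $\ff$ is algebraically closed of characteristic $0$) triangularizes $\phi(L)$ and makes $p_{\phi,\bbb}$ a product of the linear diagonal forms, and for the converse the identification $p_{\psi,\bbb}=p_{\psi(L),\{\psi(e_1),\dots,\psi(e_s)\}}$ is legitimate precisely because $\psi$ is faithful, so Theorem \ref{reducible1} applies to $\psi(L)\subseteq\gl(V)$. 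One remark: invoking Ado's theorem is heavier than necessary. You can instead apply the hypothesis to the adjoint representation: $p_{\ad}$ completely reducible gives, via Lemma \ref{D}-type reasoning (or directly, since $\ad(L)$ is spanned by the $\ad_{e_i}$), that $\ad(L)\cong L/Z(L)$ is solvable by Theorem \ref{reducible1}, and solvability of $L/Z(L)$ together with solvability of the center $Z(L)$ gives solvability of $L$. This is exactly the device the present paper uses in the converse of its nilpotent analogue, Theorem \ref{nilpotent}, and it keeps the proof free of Ado's theorem; what Ado buys you in exchange is a one-line reduction with no quotient argument.
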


 Next, we consider the adjoint representation of a Lie algebra. Recall that if $\{e_1,e_2,\ldots,e_s\}$ is a basis of a Lie algebra $L$, then the characteristic polynomial of $L$ with respect to the adjoint representation is $p_\ad(\vz):=\det(z_0I+\sum_{i=1}^s z_i \ad_{e_i})$.

\begin{lemma}\label{z0}
    Let $L$ be a Lie algebra with a basis $\{e_1,e_2,\ldots,e_s\}$. If $[L,L]\subsetneq L$, then $z_0^k$ is a factor of $p_\ad(\vz)$ where $k$ is the codimension of $[L,L]$ in $L$.
\end{lemma}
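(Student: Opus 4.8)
The plan is to choose a basis of $L$ that is adapted to the subspace $[L,L]$, compute the matrix of each $\ad_{e_i}$ in block form with respect to this basis, and read off the factor $z_0^k$ from the block-triangular structure of the determinant.

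First I would set $m = \dim[L,L]$ and $k = s - m = \operatorname{codim}_L[L,L] > 0$. Pick a basis $\{e_1, \dots, e_m\}$ of $[L,L]$ and extend it to a basis $\{e_1, \dots, e_m, e_{m+1}, \dots, e_s\}$ of $L$. The key structural observation is that $[L,L]$ is an ideal of $L$ (indeed $[L,[L,L]] \subseteq [L,L]$), so for every $x \in L$ the operator $\ad_x$ maps $[L,L]$ into $[L,L]$. Consequently, writing matrices with respect to the chosen basis, each $\ad_{e_i}$ has the block form
\[
\ad_{e_i} = \begin{pmatrix} A_i & B_i \\ 0 & C_i \end{pmatrix},
\]
where $A_i$ is $m \times m$ and $C_i$ is $k \times k$, and the lower-left block vanishes precisely because $\ad_{e_i}$ preserves the subspace spanned by the first $m$ basis vectors. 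Moreover, and this is the crucial extra point, $\ad_x(L) \subseteq [L,L]$ for every $x \in L$ by definition of the derived subalgebra; hence the image of every $\ad_{e_i}$ lies in the span of $\{e_1,\dots,e_m\}$, which forces $C_i = 0$ for all $i$.

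Next I would assemble the characteristic polynomial. We have
\[
z_0 I + \sum_{i=1}^s z_i \ad_{e_i} = \begin{pmatrix} z_0 I_m + \sum_i z_i A_i & \sum_i z_i B_i \\ 0 & z_0 I_k + \sum_i z_i C_i \end{pmatrix} = \begin{pmatrix} z_0 I_m + \sum_i z_i A_i & \sum_i z_i B_i \\ 0 & z_0 I_k \end{pmatrix},
\]
using $C_i = 0$. Since this is block upper triangular, its determinant is the product of the determinants of the diagonal blocks, so
\[
p_{\ad}(\vz) = \det\!\left(z_0 I_m + \sum_{i=1}^s z_i A_i\right) \cdot \det(z_0 I_k) = z_0^k \cdot \det\!\left(z_0 I_m + \sum_{i=1}^s z_i A_i\right).
\]
This exhibits $z_0^k$ as a factor of $p_{\ad}(\vz)$. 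Finally, since the reducibility and factorization of $p_{\ad}$ are independent of the chosen basis up to linear change of variables (as recorded after \cite[Theorem 2.2]{geng2022characteristic} in Section 2), the conclusion holds for an arbitrary basis of $L$, not just the adapted one. The only mild subtlety — the part worth stating carefully rather than a genuine obstacle — is making sure one uses both facts about $\ad$: that it preserves $[L,L]$ (giving the zero lower-left block) and that its image lands inside $[L,L]$ (giving the zero bottom-right block); the second is what upgrades the factor from a general $k\times k$ determinant to the clean $z_0^k$.
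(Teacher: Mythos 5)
Your proof is correct and follows essentially the same route as the paper: pass to a basis adapted to $[L,L]$, observe that $\ad_x(L)\subseteq[L,L]$ forces the bottom blocks to vanish, and read off the factor $z_0^k$ from the block upper-triangular determinant. The only cosmetic difference is at the end: the paper keeps the original Lie algebra basis $\{e_1,\dots,e_s\}$ for the variables and only changes the basis of the representation space $L$ (under which $p_{\ad}(\vz)$ is invariant), so no transfer is needed, whereas you relabel the Lie algebra basis and then invoke the change-of-basis substitution $\vz\mapsto \vz D$ — which is fine precisely because $D$ fixes $z_0$, so the factor $z_0^k$ survives.
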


\begin{proof}
Suppose that $L':=[L,L]\subsetneq L$. If $L'=\{0\}$, then $L$ is abelian, so $\ad_x=0$ for all $x\in L$. Thus $p_\ad(\vz)=\det(z_0I+\sum_{i=1}^s z_i \ad_{e_i})=z_0^n$. Assume that $L'\neq\{0\}$. Let $\{x_1,x_2,\ldots,x_t\}$ be a basis of $L'$ where $k:=s-t\leq1$ and extend this basis to a basis $\{x_1,x_2,\ldots,x_t,y_{t+1},\ldots,y_s\}$ of $L$. Since $\ad_x(L)\subseteq[L,L]=L'$ for every $x\in L$, the adjoint matrix $\ad_x$ is of the form
$$
\begin{pmatrix}
A_{11}&A_{12}\\
0&0
\end{pmatrix}
$$
where $A_{11}\in \text{M}_{t\times t}(\ff)$ and $A_{12}\in \text{M}_{t\times (s-t)}(\ff)$. Thus 
$$
z_0I+\sum_{i=1}^s z_i \ad_{e_i}=
\begin{pmatrix}
A_{11}'(\vz) & A_{12}'(\vz)\\
0&z_0I
\end{pmatrix}
$$
where $A_{11}'(\vz)\in \text{M}_{t\times t}(\ff[\vz])$ and $A_{12}'(\vz)\in \text{M}_{t\times k}(\ff[\vz])$. Consequently,
$$
p_\ad(\vz)=\det(z_0I+\sum_{i=1}^s z_i \ad_{e_i})=z_0^k\det(A_{11}'(\vz))
$$
as desired.
\end{proof}

If $L$ is solvable, then $[L,L]$ is a proper subalgebra of $L$. Then the following corollary immediately follows.

\begin{cor}\label{solvabledet}
    Let $L$ be a Lie algebra with a basis $\{e_1,e_2,\ldots,e_s\}$. If $L$ is solvable, then $z_0^k$ is a factor of the characteristic polynomial $p_\ad(\vz)$ where $k$ is the codimension of $[L,L]$ in $L$.
\end{cor}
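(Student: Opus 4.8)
The plan is to reduce the statement immediately to Lemma \ref{z0}. The only point that needs to be checked is that solvability of $L$ forces the derived subalgebra $[L,L]$ to be a \emph{proper} subspace of $L$; once this is established, Lemma \ref{z0} applies verbatim and produces the factor $z_0^k$ with $k = \dim L - \dim [L,L]$, which is exactly the codimension of $[L,L]$ in $L$.

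To see that $[L,L]\subsetneq L$, I would argue by contradiction using the derived series $L^{(0)} := L$ and $L^{(i+1)} := [L^{(i)}, L^{(i)}]$. If $L \neq \{0\}$ and $[L,L] = L$, then an easy induction shows $L^{(i)} = L \neq \{0\}$ for every $i$, so the derived series never terminates, contradicting the solvability of $L$. Hence $[L,L]\subsetneq L$. The degenerate case $L = \{0\}$ is handled separately and trivially: then $[L,L] = L$ has codimension $k = 0$, and the claimed divisibility is just $z_0^{0} = 1 \mid p_\ad(\vz)$.

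With $[L,L]\subsetneq L$ in hand, Lemma \ref{z0} gives precisely that $z_0^k$ is a factor of $p_\ad(\vz)$, where $k$ is the codimension of $[L,L]$ in $L$, which is the assertion of the corollary.

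I do not expect any genuine obstacle here. All of the substantive work — putting each adjoint matrix $\ad_x$ in block form relative to a basis adapted to $[L,L]$, and reading off the factor $z_0^k$ from the resulting block-triangular determinant — has already been carried out in the proof of Lemma \ref{z0}. The corollary only superimposes on it the elementary observation that a nonzero solvable Lie algebra cannot be perfect, so the hypothesis $[L,L]\subsetneq L$ of that lemma is automatically satisfied.
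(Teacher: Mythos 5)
Your proposal is correct and follows essentially the same route as the paper: the paper also derives the corollary immediately from Lemma \ref{z0} via the standard observation that a solvable Lie algebra satisfies $[L,L]\subsetneq L$; you merely spell out the derived-series argument and the trivial case that the paper leaves implicit.
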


The converse of Corollary \ref{solvabledet} does not hold in general. Therefore this result is not enough to characterize solvable Lie algebras, which will be illustrated in the following example.

\begin{ex}
Let $L_5=\spann\{e_1,e_2,e_3,e_4,e_5\}$ be a non-solvable Lie algebra defined by
$[e_1,e_2]=e_1, [e_1,e_3]=2e_2, [e_2,e_3]=e_3$ and $[e_4,e_5]=e_4$ (see \cite{orbit}). Then we have
\begin{multicols}{2}
    \noindent
    \begin{align*}
    \scalemath{0.85}{
        \ad_{e_1}=
        \begin{pmatrix}
            \ph0&\ph1&\ph0&\ph0&\ph0\ph\\
            \ph0&\ph0&\ph2&\ph0&\ph0\ph\\
            \ph0&\ph0&\ph0&\ph0&\ph0\ph\\
            \ph0&\ph0&\ph0&\ph0&\ph0\ph\\
            \ph0&\ph0&\ph0&\ph0&\ph0\ph
        \end{pmatrix},
    }\\
    \scalemath{0.85}{
        \ad_{e_2}=
        \begin{pmatrix}
            -1&\ph0&\ph0&\ph0&\ph0\ph\\
            \ph0&\ph0&\ph0&\ph0&\ph0\ph\\
            \ph0&\ph0&\ph1&\ph0&\ph0\ph\\
            \ph0&\ph0&\ph0&\ph0&\ph0\ph\\
            \ph0&\ph0&\ph0&\ph0&\ph0\ph
        \end{pmatrix},
    }\\
    \scalemath{0.85}{
        \ad_{e_3}=
        \begin{pmatrix}
            \ph0&\ph0&\ph0&\ph0&\ph0\ph\\
            -2&\ph0&\ph0&\ph0&\ph0\ph\\
            \ph0&-1&\ph0&\ph0&\ph0\ph\\
            \ph0&\ph0&\ph0&\ph0&\ph0\ph\\
            \ph0&\ph0&\ph0&\ph0&\ph0\ph
        \end{pmatrix},
    }\\
    \scalemath{0.85}{
        \ad_{e_4}=
        \begin{pmatrix}
            \ph0&\ph0&\ph0&\ph0&\ph0\ph\\
            \ph0&\ph0&\ph0&\ph0&\ph0\ph\\
            \ph0&\ph0&\ph0&\ph0&\ph0\ph\\
            \ph0&\ph0&\ph0&\ph0&\ph1\ph\\
            \ph0&\ph0&\ph0&\ph0&\ph0\ph\\
        \end{pmatrix},
    }\\
    \scalemath{0.85}{
        \ad_{e_5}=
        \begin{pmatrix}
            \ph0&\ph0&\ph0&\ph0&\ph0\ph\\
            \ph0&\ph0&\ph0&\ph0&\ph0\ph\\
            \ph0&\ph0&\ph0&\ph0&\ph0\ph\\
            \ph0&\ph0&\ph0&-1&\ph0\ph\\
            \ph0&\ph0&\ph0&\ph0&\ph0\ph\\
        \end{pmatrix}
    \phantom{,}
    }
    \end{align*}
\end{multicols}
and the characteristic polynomial
\begin{align*}
    p_\ad(\vz)=\det
        \begin{pmatrix}
            z_0-z_2&\ph z_1&0&0&0\ph\\
            -2z_3&\ph z_0&2z_1&0&0\ph\\
            0&-z_3&z_0+z_2&0&0\ph\\
            0&\ph0&0&z_0-z_5&z_4\ph\\
            0&\ph0&0&0&z_0\ph
        \end{pmatrix}
\end{align*}
is equal to $z_0(z_0^4-z_0^3z_5-z_0^2z_2^2+3z_0^2z_1z_3+z_0z_2^2z_5+z_0z_1z_2z_3-3z_0z_1z_3z_5-z_1z_2z_3z_5)$.
Observe that the codimension of $[L,L]$ in $L$ is 1, while $z_0$ is also a factor of $p_\ad(\vz)$.
\end{ex}


For nilpotent Lie algebras, we have a partial result analogous to Theorem \ref{reducible1}.

\begin{prop}\label{nilpotent1}
    Let $L$ be a subalgebra of $\gl(V)$ with basis $\{e_1,e_2,\ldots,e_s\}$. If the characteristic polynomial $p_L(\vz) := \det\left(z_0I+\sum_{i=1}^s z_ie_i\right)$ can be written as $(f(\vz))^n$ where $f(\vz)\in\ff[\vz]$ is a homogeneous polynomial of degree 1, then $L$ is nilpotent.
\end{prop}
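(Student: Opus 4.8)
The plan is to show that $\mathrm{ad}_x\colon L\to L$ is a nilpotent operator for every $x\in L$ and then apply Engel's theorem to conclude that $L$ is a nilpotent Lie algebra.

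First I would normalize the hypothesis. Since the coefficient of $z_0^n$ in $p_L(\vz)=\det\left(z_0I+\sum_{i=1}^s z_ie_i\right)$ is $1$, where $n=\dim V$, writing $p_L=(f(\vz))^n$ with $f$ homogeneous of degree $1$ forces the $z_0$-coefficient of $f$ to be an $n$-th root of unity in $\ff$; absorbing it, we may assume $f(\vz)=z_0+\sum_{i=1}^s c_i z_i$ with $c_i\in\ff$. Now take an arbitrary element $x=\sum_{i=1}^s a_i e_i\in L$. Substituting $z_0=t$ and $z_i=-a_i$ into $p_L$ gives
$$\det(tI-x)=p_L(t,-a_1,\dots,-a_s)=\Bigl(t-\textstyle\sum_{i=1}^s c_i a_i\Bigr)^{n}.$$
Hence $x$ has a single eigenvalue $\lambda(x):=\sum_{i=1}^s c_i a_i$, the operator $x-\lambda(x)I$ on $V$ is nilpotent, and $x\mapsto\lambda(x)$ is linear in the coordinates $a_i$.

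Next I would exploit that scalar matrices are central in $\mathfrak{gl}(V)$: for $y\in L$ we have $[x-\lambda(x)I,\,y]=[x,y]\in L$, so the operator $\mathrm{ad}_{x-\lambda(x)I}$ on $\mathfrak{gl}(V)$ restricts to $L$ and agrees there with $\mathrm{ad}_x$. Setting $x_0:=x-\lambda(x)I$, which is nilpotent on $V$, the operator $\mathrm{ad}_{x_0}$ on $\mathfrak{gl}(V)$ is the difference of left- and right-multiplication by $x_0$; these two commute and are each nilpotent, so $\mathrm{ad}_{x_0}$ is nilpotent on $\mathfrak{gl}(V)$. Restricting the nilpotent operator $\mathrm{ad}_{x_0}$ to the invariant subspace $L$ shows $\mathrm{ad}_x|_L=\mathrm{ad}_{x_0}|_L$ is nilpotent. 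Since this holds for every $x\in L$, Engel's theorem gives that $L$ is nilpotent.

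I do not expect a genuine obstacle: the only points needing care are the normalization of $f$ via the $z_0^n$-coefficient and the standard fact that $\mathrm{ad}$ of a nilpotent operator is nilpotent, together with the observation that $\mathrm{ad}_x$ is insensitive to the scalar part $\lambda(x)I$ even though $I$ need not lie in $L$. One could additionally remark that taking traces in $\det(tI-x)=(t-\lambda(x))^n$ yields $\operatorname{tr}x=n\lambda(x)$, so $\lambda$ vanishes on $[L,L]$ and $[L,L]$ consists of nilpotent operators; but this refinement is not required for the argument above.
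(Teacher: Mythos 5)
Your proof is correct, and it takes a genuinely different route from the paper's. The paper first invokes its Theorem \ref{reducible1} (complete reducibility of $p_L$ implies solvability) and then Lie's theorem to simultaneously upper-triangularize $L$; comparing diagonal entries against $(f(\vz))^n$ in the UFD $\ff[\vz]$ forces each $e_i$ to be $a_iI+A_i$ with $A_i$ strictly upper triangular, and nilpotency of $L$ is deduced from the strictly upper triangular subalgebra $L_0=\spann\{A_1,\dots,A_s\}$ via $L^j=(L_0)^j$ for $j\geq 2$. You instead specialize the polynomial identity at each individual element: for $x=\sum a_ie_i$, the substitution $z_0=t$, $z_i=-a_i$ turns the hypothesis into $\det(tI-x)=(t-\lambda(x))^n$, so $x-\lambda(x)I$ is nilpotent on $V$ by Cayley--Hamilton, and since $I$ is central, $\ad_x|_L=\ad_{x-\lambda(x)I}|_L$ is the restriction of a nilpotent operator on $\gl(V)$ to the invariant subspace $L$; Engel's theorem then finishes. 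Your normalization of $f$ (the $z_0$-coefficient is an $n$-th root of unity, absorbed into $f$) is the one detail that needs stating, and you handle it. What your approach buys: it bypasses both the solvability characterization and Lie's theorem, so it is more elementary and does not actually need $\ff$ algebraically closed of characteristic $0$ (the factorization is already given over $\ff$, and Engel's theorem holds over any field). What the paper's approach buys: the explicit structural decomposition of $L$ as scalars plus the strictly upper triangular nilpotent algebra $L_0$, with $L^j=(L_0)^j$, which is slightly more information than bare nilpotency; your remark that $\operatorname{tr}x=n\lambda(x)$ recovers part of this (namely that $[L,L]$ consists of nilpotent operators) but is, as you say, not needed.
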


\begin{proof}
Assume that the characteristic polynomial $p_L(\vz) = \det(z_0I+\sum_{i=1}^s z_ie_i)$ can be written as $(f(\vz))^n$ where $f(\vz)\in\ff[\vz]$ is a homogeneous polynomial of degree 1. Since $p_L(\vz)$ is completely reducible, Theorem \ref{reducible1} implies that $L$ is solvable. By Lie's theorem, there exists a basis $\bbb$ of $V$ such that the matrices of $L$ can be simultaneously upper triangularized. For each $i \in \{1, \dots, s\}$, let $(d_{i1}, \dots, d_{in})$ be the diagonal entry of the matrix of $e_i$. Then
$$(f(\vz))^n \ = \ p_L(\vz) \ = \ \left(z_0+\sum_{i=1}^s d_{i1}z_i\right) \cdots \left(z_0+\sum_{i=1}^s d_{in}z_i\right).$$
Since $\ff[\vz]$ is a UFD, $\sum_{i=1}^s d_{ij}z_i = \sum_{i=1}^s d_{ik}z_i$ for all $j,k \in \{1, \dots, n\}$. Therefore, for each $i \in \{1, \dots, s\}$, $d_{ij} = d_{ik}$ for all $j,k \in \{1, \dots, n\}$. This implies that for each $i \in \{1, \dots, s\}$, the matrix of $e_i$ is of the form $a_iI+A_i$ where $a_i\in\ff$ and $A_i$ is a strictly upper triangular matrix. Now, we see that for all scalars $\alpha_1, \dots, \alpha_s, \beta_1, \dots, \beta_s$ in $\ff$, the commutators
\begin{eqnarray*}
\left[\sum_{i=1}^s \alpha_ie_i, \sum_{j=1}^s \beta_je_j \right] &=& \left[\sum_{i=1}^s \alpha_i(a_iI+A_i), \ \sum_{j=1}^s \beta_j(a_jI+A_j) \right] \\
                &=& \sum_{i,j=1}^s \ (\alpha_i\beta_j)[A_i,A_j]
\end{eqnarray*}
are strictly upper triangular matrices. Thus, all linear combinations of these commutators are also strictly upper triangular matrices. So, with respect to the basis $\bbb$, the set of all matrices of $[L,L]$ is a Lie subalgebra of $\mathfrak{n}(n, \ff)$. Again, for any strictly upper triangular matrix $A\in[L,L]$ and $\alpha_1, \dots, \alpha_s\in\ff$, we have 
$$
\left[\sum_{i=1}^s \alpha_ie_i, A \right] = \left[\sum_{i=1}^s \alpha_i(a_iI+A_i), A \right] = \sum_{i=1}^s \alpha_i[A_i,A] = \left[\sum_{i=1}^s \alpha_iA_i, A \right]
$$
as $I\in Z(\gl(V))$. Let $L_0$ be a Lie algebra spanned by $\{A_1,A_2,\ldots,A_s\}$. Then $L^j=(L_0)^j$ for any $j\geq2$. Clearly, $L_0\subseteq\mathfrak{n}(n, \ff)$ is nilpotent.  Hence, $L$ is also nilpotent. This completes the proof.
\end{proof}



The converse of Proposition \ref{nilpotent1} does not hold in general. For example, we consider $L=\spann\{E_{11}\}$ and $\dim V=n>1$. Clearly, $L$ is nilpotent while the characteristic polynomial is $(z_0+z_1)z_0^{n-1}$.

\begin{lemma}\label{D}
    Let $L$ be a Lie algebra with basis $\{e_1,e_2,\ldots,e_s\}$ and $\phi$ a representation of $L$. Then 
    $$
    p_{\phi}(\vz)=p_{\phi(L)}(\vz D)
    $$
    for some invertible matrix $D$ of size $n+1$.
\end{lemma}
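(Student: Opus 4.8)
The plan is to compute $p_\phi$ directly from its definition, rewriting each operator $\phi(e_i)$ in terms of a basis of the image subalgebra $\phi(L)$; the linear change of variables that thereby appears is exactly $\vz\mapsto\vz D$.

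First I would observe that $\phi(L)$ is a Lie subalgebra of $\gl(V)$, set $r=\dim_\ff\phi(L)$, and fix a basis $\{f_1,\dots,f_r\}$ of $\phi(L)$. Since $\phi(e_1),\dots,\phi(e_s)$ span $\phi(L)$, each $\phi(e_i)$ can be written uniquely as $\phi(e_i)=\sum_{j=1}^r c_{ji}f_j$ with $c_{ji}\in\ff$, and the matrix $C=(c_{ji})\in M_{r\times s}(\ff)$ has rank $r$, again because $\phi(e_1),\dots,\phi(e_s)$ span $\phi(L)$.

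Next I would substitute this into the definition and collect the coefficient of each $f_j$:
\[
p_\phi(z_0,\dots,z_s)=\det\!\left(z_0I+\sum_{i=1}^s z_i\phi(e_i)\right)=\det\!\left(z_0I+\sum_{j=1}^r\Bigl(\sum_{i=1}^s c_{ji}z_i\Bigr)f_j\right).
\]
Setting $w_0=z_0$ and $w_j=\sum_{i=1}^s c_{ji}z_i$ for $1\le j\le r$, the right-hand side is by definition $p_{\phi(L),\{f_1,\dots,f_r\}}(w_0,\dots,w_r)$, and $(w_0,\dots,w_r)=(z_0,\dots,z_s)D$ with
\[
D=\begin{pmatrix}1&0\\0&C^{\top}\end{pmatrix}.
\]
Hence $p_\phi(\vz)=p_{\phi(L)}(\vz D)$. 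Because $\mathrm{rank}\,C=r$, the matrix $D$ has full column rank $r+1$; in particular, if $\phi$ is faithful then $r=s$ and $D$ is an invertible square matrix of size $s+1$, while in general $p_{\phi(L)}$ is recovered from $p_\phi$ by composing with any left inverse of $D$.

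The computation itself is routine, so the only point that needs attention is that the matrices $\phi(e_1),\dots,\phi(e_s)$ need not be linearly independent; this is precisely why one must first replace them by an honest basis of $\phi(L)$ before reading off $D$. If one prefers to keep $D$ genuinely square and invertible without assuming faithfulness, one may instead first invoke the change-of-basis formula of Section 2 to arrange that the last $s-r$ of the $e_i$ span $\ker\phi$ (this alters $D$ only by an invertible factor of size $s+1$); then $\phi(e_{r+1})=\dots=\phi(e_s)=0$, so $p_\phi$ depends only on $z_0,\dots,z_r$, and the statement reduces to the faithful case in those $r+1$ variables.
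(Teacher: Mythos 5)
Your main computation is correct, but it follows a genuinely different route from the paper, and on its own it does not quite deliver the lemma as stated. You expand on the image side: choosing a basis $\{f_1,\dots,f_r\}$ of $\phi(L)$ and the coefficient matrix $C$ with $\phi(e_i)=\sum_j c_{ji}f_j$, you get $p_\phi(\vz)=p_{\phi(L)}(\vz D)$ with $D=\bigl(\begin{smallmatrix}1&0\\0&C^\top\end{smallmatrix}\bigr)$ of size $(s+1)\times(r+1)$ — clean and direct, but square and invertible only when $\phi$ is faithful, as you yourself note. The paper instead works on the kernel side: it extends a basis of $\ker\phi$ to a basis $\bbb_1$ of $L$ with the kernel vectors last, applies the change-of-basis formula $p_{\phi,\bbb_2}(\vz)=p_{\phi,\bbb_1}(\vz D)$ with $D=\bigl(\begin{smallmatrix}1&0\\0&P^\top\end{smallmatrix}\bigr)$ genuinely invertible of size $s+1$, and then identifies $p_{\phi,\bbb_1}$ with $p_{\phi(L)}$ by padding the latter with the zero operators $\phi(x_j)=0$, i.e.\ by reading $p_{\phi(L)}$ as a polynomial in $s+1$ variables that simply does not involve the last $\dim\ker\phi$ of them. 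Your closing paragraph (rearrange the $e_i$ so the last $s-r$ span $\ker\phi$ and invoke the Section 2 formula) is exactly this argument, and it is the step that actually produces the invertible square $D$ the lemma asserts — and which Theorem \ref{nilpotent} uses when it argues that $D$ fixes $z_0$ and only rewrites $z_1,\dots,z_s$; so for a complete proof of the statement you should carry that reduction out explicitly (in particular, say how $p_{\phi(L)}$ is to be evaluated on the $s+1$ entries of $\vz D$) rather than leave it as a remark. It is worth adding that your rectangular $D$, having first column $(1,0,\dots,0)^\top$ and full column rank, would in fact suffice for the paper's only application of the lemma, since surjectivity of $\vz\mapsto\vz D$ and preservation of $z_0$ already give $p_{\ad(L)}=z_0^{\dim L}$ from $p_\ad=z_0^{\dim L}$; and the "size $n+1$" in the statement appears to be a typo for $s+1$, which both you and the paper's proof use.
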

\begin{proof}
Let $\{x_1,x_2,\ldots,x_t\}$ be a basis of $\ker\phi$. Then we extend this basis to a basis $\bbb_1=\{y_1,\ldots,y_{s-t},x_1,x_2,\ldots,x_t\}$ of $L$. Let $P$ be a transition matrix from $\bbb_2=\{e_1,e_2,\ldots,e_s\}$ to $\bbb_1=\{y_1,\ldots,y_{s-t},x_1,x_2,\ldots,x_t\}$. Therefore 
$$
p_{\phi}(\vz):=p_{\phi,\bbb_2}(\vz)=p_{\phi,\bbb_1}(\vz D)
$$
where 
$$
D=
\begin{pmatrix}
    1 & 0 \\
    0 & P^\top
\end{pmatrix}.
$$
On the other hand, since $\{\phi(y_1),\ldots,\phi(y_{s-t})\}$ is a basis of $\phi(L)$ and $\phi(x_i)=0$ for all $i=1,2,\ldots,t$, we have
\begin{align*}
    p_{\phi(L)}(\vz)&=\det\left(z_0I+\sum_{i=1}^{s-t} z_i\phi(y_i)\right)\\
    &=\det\left(z_0I+\sum_{i=1}^{s-t} z_i\phi(y_i)+\sum_{j=1}^t z_{s-t+j}\phi(x_j)\right)\\
    &=p_{\phi,\bbb_1}(\vz).
\end{align*}
Hence $p_{\phi}(\vz):=p_{\phi,\bbb_2}(\vz)=p_{\phi,\bbb_1}(\vz D)=p_{\phi(L)}(\vz D)$. This completes the proof.
\end{proof}

\medskip

Now we have the following characterization of a nilpotent Lie algebra via its characteristic polynomial associated to the adjoint representation. This is another main result of this paper.

\medskip

\begin{theorem}\label{nilpotent}
    Let $L$ be a Lie algebra with a basis $\{e_1,e_2,\ldots,e_s\}$. Then $L$ is nilpotent if and only if $p_\ad(\vz)=z_0^n$.
\end{theorem}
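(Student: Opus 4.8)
The plan is to prove both implications by moving between $L$ and the subalgebra $\ad(L)\subseteq\gl(L)$ via Lemma \ref{D}, which makes the subalgebra results of Section 4 available, in particular Proposition \ref{nilpotent1} and, behind it, Engel's and Lie's theorems. The one bookkeeping point I would record first is that the matrix $D$ of Lemma \ref{D} (block-diagonal with upper-left entry $1$), as well as $D^{-1}$, acts as the identity on the $z_0$-coordinate; hence in the identity $p_{\ad}(\vz)=p_{\ad(L)}(\vz D)$ one side equals $z_0^n$ if and only if the other does.

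For the direction ``$L$ nilpotent $\Rightarrow p_{\ad}(\vz)=z_0^n$'': since $L$ is nilpotent its lower central series terminates, so every $\ad_x$ is a nilpotent endomorphism of $L$; in other words $\ad(L)$ is a Lie subalgebra of $\gl(L)$ consisting of nilpotent operators. By Engel's theorem there is a basis of $L$ with respect to which every element of $\ad(L)$ is strictly upper triangular, and since the characteristic polynomial is unchanged under a change of basis of the representation space, I would compute $p_{\ad(L)}$ in that basis: for any basis $\{\ad_{y_1},\dots,\ad_{y_m}\}$ of $\ad(L)$ the matrix $z_0I+\sum_j z_j\ad_{y_j}$ is upper triangular with every diagonal entry equal to $z_0$, so $p_{\ad(L)}(\vz)=z_0^n$. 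Lemma \ref{D} then yields $p_{\ad}(\vz)=z_0^n$.

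For the converse, suppose $p_{\ad}(\vz)=z_0^n$. By Lemma \ref{D} together with the bookkeeping point above, $p_{\ad(L)}(\vz)=z_0^n=(z_0)^n$, and $z_0$ is homogeneous of degree $1$, so Proposition \ref{nilpotent1} applied to the subalgebra $\ad(L)\subseteq\gl(L)$ shows that $\ad(L)$ is a nilpotent Lie algebra. Since $\ad\colon L\to\ad(L)$ is surjective with kernel $Z(L)$, the quotient $L/Z(L)\cong\ad(L)$ is nilpotent; writing $L^1=L$ and $L^{i+1}=[L,L^i]$, nilpotency of $L/Z(L)$ forces $L^c\subseteq Z(L)$ for some $c$, whence $L^{c+1}=[L,L^c]\subseteq[L,Z(L)]=0$, so $L$ is nilpotent.

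I expect this to go through without real difficulty, because Proposition \ref{nilpotent1} already packages the solvable/Lie's-theorem/unique-factorization/Engel argument that would otherwise be the technical core. The point needing the most care is conceptual rather than computational: we are given complete reducibility of $p_{\ad}$ only for the adjoint representation and for a single basis, so Theorem \ref{reducible2} is not directly applicable; the work is in routing everything through $\ad(L)\subseteq\gl(L)$ by Lemma \ref{D} and then transporting nilpotency back along the isomorphism $\ad(L)\cong L/Z(L)$.
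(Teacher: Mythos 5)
Your proposal is correct and follows essentially the same route as the paper: Engel's theorem plus simultaneous strict upper triangularization for the forward direction, and for the converse Lemma \ref{D} (with the observation that $D$ fixes the $z_0$-coordinate) to reduce to $\ad(L)\subseteq\gl(L)$, then Proposition \ref{nilpotent1} and the isomorphism $\ad(L)\cong L/Z(L)$ to conclude nilpotency of $L$. The only cosmetic difference is that you route the forward direction through $p_{\ad(L)}$ and Lemma \ref{D}, whereas the paper computes $p_{\ad}$ directly in the Engel basis; and you spell out the standard step that nilpotency of $L/Z(L)$ implies nilpotency of $L$, which the paper leaves implicit.
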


\begin{proof}
Suppose that $L$ is nilpotent. By Engel's theorem, $\ad_x$ is nilpotent for every $x\in L$. Thus $\ad(L)$ is a Lie subalgebra of $\gl(L)$ of nilpotent transformations. By Engel's theorem, there exists a basis $\bbb$ of a vector space $L$ such that the matrices of $L$ can be simultaneously strictly upper triangularized. Hence 
$$
p_\ad(\vz)=\det(z_0I+\sum_{i=1}^s z_i\ad_{e_i})=\det(z_0I)=z_0^n.
$$
\indent Conversely, suppose $p_\ad(\vz)=z_0^n$. By Lemma \ref{D}, there exists an invertible matrix $D$ of the form
$$
D=
\begin{pmatrix}
    1 & 0 \\
    0 & P^\top
\end{pmatrix}
$$
where $P$ is a transition matrix, such that 
$$
p_{\ad(L)}(\vz D)=p_\ad(\vz)=z_0^n.
$$
Since the matrix $D$ only rewrite $z_i$ for all $i\in\{1,2,\ldots,s\}$ but leave $z_0$ invariant, we get $p_{\ad(L)}(\vz)=z_0^n$ as well. By Proposition \ref{nilpotent1}, $\ad(L)\cong\quo{L}{Z(L)}$ is nilpotent. Hence $L$ is also nilpotent.
\end{proof}

\medskip

We can apply Theorem \ref{nilpotent} to determine if a finite-dimensional Lie algebra is nilpotent. A Lie algebra $L$ in the next example is indeed nilpotent, but the nilpotency is not obvious from bracket relations.

\medskip

\begin{ex}
    Let $L=\spann\{e_1,e_2,e_3,e_4,e_5\}$ be a Lie algebra defined by
    \begin{multicols}{2}
    \noindent
    \begin{align*}
        [e_1,e_2]&=e_1+e_2+e_3-e_4-e_5,\\
        [e_1,e_3]&=-e_2-e_3+e_4,\\
        [e_1,e_4]&=e_3,\\
        [e_1,e_5]&=e_2+e_3-e_4,\\
        [e_2,e_3]&=-e_1+e_3+e_5,\\
        [e_2,e_4]&=-e_1+e_3+e_5,\\
        [e_2,e_5]&=-e_2-2e_3+e_4,\\
        [e_3,e_4]&=e_1-e_3-e_5,\\
        [e_3,e_5]&=e_2+e_3-e_4,\\
        [e_4,e_5]&=e_1-2e_3-e_5.
    \end{align*}
    \end{multicols}
    Then we have
    \begin{multicols}{2}
    \noindent
    \begin{align*}
    \scalemath{0.85}{
        \ad_{e_1}=
        \begin{pmatrix}
            \ph0&\ph1&\ph0&\ph0&\ph0\ph\\
            \ph0&\ph1&-1&\ph0&\ph1\ph\\
            \ph0&\ph1&-1&\ph1&\ph1\ph\\
            \ph0&-1&\ph1&\ph0&-1\ph\\
            \ph0&-1&\ph0&\ph0&\ph0\ph
        \end{pmatrix},
    }\\
    \scalemath{0.85}{
        \ad_{e_2}=
        \begin{pmatrix}
            -1&\ph0&-1&-1&\ph0\ph\\
            -1&\ph0&\ph0&\ph0&-1\ph\\
            -1&\ph0&\ph1&\ph1&-2\ph\\
            \ph1&\ph0&\ph0&\ph0&\ph1\ph\\
            \ph1&\ph0&\ph1&\ph1&\ph0\ph
        \end{pmatrix},
    }\\
    \scalemath{0.85}{
        \ad_{e_3}=
        \begin{pmatrix}
            \ph0&\ph1&\ph0&\ph1&\ph0\ph\\
            \ph1&\ph0&\ph0&\ph0&\ph1\ph\\
            \ph1&-1&\ph0&-1&\ph1\ph\\
            -1&\ph0&\ph0&\ph0&-1\ph\\
            \ph0&-1&\ph0&-1&\ph0\ph
        \end{pmatrix},
    }\\
    \scalemath{0.85}{
        \ad_{e_4}=
        \begin{pmatrix}
            \ph0&\ph1&-1&\ph0&\ph1\ph\\
            \ph0&\ph0&\ph0&\ph0&\ph0\ph\\
            -1&-1&\ph1&\ph0&-2\ph\\
            \ph0&\ph0&\ph0&\ph0&\ph0\ph\\
            \ph0&-1&\ph1&\ph0&-1\ph
        \end{pmatrix},
    }\\
    \scalemath{0.85}{
        \ad_{e_5}=
        \begin{pmatrix}
            \ph0&\ph0&\ph0&-1&\ph0\ph\\
            -1&\ph1&-1&\ph0&\ph0\ph\\
            -1&\ph2&-1&\ph2&\ph0\ph\\
            \ph1&-1&\ph1&\ph0&\ph0\ph\\
            \ph0&\ph0&\ph0&\ph1&\ph0\ph
        \end{pmatrix}
    }
    \end{align*}
    \end{multicols}
    and the characteristic polynomial $p_\ad(\vz)$ is
    \begin{align*}
    \scalemath{0.7}{
    \det
    \begin{pmatrix}
            z_0-z_2&z_1+z_3+z_4&-z_2-z_4&-z_2+z_3-z_5&z_4\\
            -z_2+z_3-z_5&z_0+z_1+z_5&-z_1-z_5&0&z_1-z_2+z_3\\
            -z_2+z_3-z_4-z_5&z_1-z_3-z_4+2z_5&z_0-z_1+z_2+z_4-z_5&z_1+z_2-z_3+2z_5&z_1-2z_2+z_3-2z_4\\
            z_2-z_3+z_5&-z_1-z_5&z_1+z_5&z_0&-z_1+z_2-z_3\\
            z_2&-z_1-z_3-z_4&z_2+z_4&z_2-z_3+z_5&z_0-z_4
    \end{pmatrix}
    }
    \end{align*}
    which exactly equal to $z_0^5$. Hence $L$ is nilpotent by Theorem \ref{nilpotent}.\\
    \indent In fact, $L$ is isomorphic to a Lie algebra $\mathfrak{n}_2^5=\spann\{x_1,x_2,x_3,x_4,x_5\}$ where $[x_1,x_2]=x_3, [x_1,x_3]=x_4, [x_1,x_4]=x_5, [x_2,x_3]=x_5$ (see \cite{goze}) via an isomorphism $\varphi:\mathfrak{n}_2^5\to L$ defined by $\varphi(x_1)=e_1-e_3,\varphi(x_2)=e_2+e_3, \varphi(x_3)=e_3, \varphi(x_4)=-e_2-e_3+e_4$ and $\varphi(x_5)=-e_1+e_3+e_5$.
\end{ex}

\medskip

Lastly, using the homogeneity of the characteristic polynomial, to check the nilpotency of a Lie algebra by our criterion, we can ignore the $z_0$ variable in the calculation of the characteristic polynomial of the adjoint representation. We have the following characterization of finite-dimensional nilpotent Lie algebras. 

\medskip

\begin{cor}
    Let $L$ be a Lie algebra with a basis $\{e_1,e_2,\ldots,e_s\}$. Then $L$ is nilpotent if and only if \ $\det(I+\sum_{i=1}^s z_i \ad_{e_i})=1$.
\end{cor}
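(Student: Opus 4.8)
The plan is to deduce this corollary directly from Theorem~\ref{nilpotent} together with the homogeneity of the characteristic polynomial. Recall that $p_\ad(\vz)=\det\left(z_0I+\sum_{i=1}^s z_i\ad_{e_i}\right)$ is a homogeneous polynomial of degree $n=\dim L$ in the variables $z_0,z_1,\dots,z_s$. The key observation is that one recovers $\det\left(I+\sum_{i=1}^s z_i\ad_{e_i}\right)$ from $p_\ad(\vz)$ by setting $z_0=1$, and conversely one recovers $p_\ad(\vz)$ from this specialization by homogenization: writing $q(z_1,\dots,z_s):=\det\left(I+\sum_{i=1}^s z_i\ad_{e_i}\right)=p_\ad(1,z_1,\dots,z_s)$, homogeneity gives $p_\ad(z_0,z_1,\dots,z_s)=z_0^n\, q(z_1/z_0,\dots,z_s/z_0)$, so that $p_\ad$ and $q$ determine each other.

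First I would establish the forward direction: if $L$ is nilpotent, then by Theorem~\ref{nilpotent} we have $p_\ad(\vz)=z_0^n$, and setting $z_0=1$ yields $\det\left(I+\sum_{i=1}^s z_i\ad_{e_i}\right)=1$. For the converse, suppose $\det\left(I+\sum_{i=1}^s z_i\ad_{e_i}\right)=1$, i.e.\ $q\equiv 1$. Then by the homogenization identity above, $p_\ad(z_0,z_1,\dots,z_s)=z_0^n\cdot 1=z_0^n$, and Theorem~\ref{nilpotent} gives that $L$ is nilpotent.

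I expect no serious obstacle here; the only point requiring a word of care is the passage between $q$ and $p_\ad$. One should note that $q$ is a polynomial of degree at most $n$ in $z_1,\dots,z_s$ whose degree-$d$ homogeneous component equals the coefficient of $z_0^{n-d}$ in $p_\ad$ (viewed as a polynomial in $z_0$), so that $q\equiv 1$ forces every coefficient of $p_\ad$ other than that of $z_0^n$ to vanish, and the coefficient of $z_0^n$ in $p_\ad$ is $\det(I)=1$; hence $p_\ad(\vz)=z_0^n$. Equivalently and more cleanly, $p_\ad(z_0,\vz')=z_0^n\,q(\vz'/z_0)$ as rational functions and both sides are polynomials, so they agree identically. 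With this remark the corollary is immediate from Theorem~\ref{nilpotent}.
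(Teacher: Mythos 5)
Your proposal is correct and follows essentially the same route as the paper: both directions reduce to Theorem \ref{nilpotent}, with the converse handled by the observation that, by homogeneity of $p_\ad$, the specialization $z_0=1$ and the homogenization in $z_0$ determine each other (the paper phrases this contrapositively via the decomposition $p_\ad=\sum_i f_i z_0^{n-i}$, which is the same coefficient-matching you note). No gaps.
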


\begin{proof}
    The sufficient condition is obvious. Suppose that $L$ is not nilpotent. Then $p_\ad(\vz)=\det(z_0I+\sum_{i=1}^s z_i \ad_{e_i})\neq z_0^n$ by Theorem \ref{nilpotent}. Since $p_\ad(\vz)$ is a homogeneous polynomial of degree $n$, it can be expressed as
    $$
    p_\ad(\vz)=\sum_{i=0}^n f_iz_0^{n-i}\in\ff[\vz]
    $$
    where $f_i\in\ff[z_1,\ldots,z_s]$ is a homogeneous polynomial of degree $i$ for all $i\in\{0,1,\ldots,n\}$. Since $p_\ad(\vz)\neq z_0^n$, either $f_0\neq1$ or $f_{i_0}\neq0$ for some $i_0\in\{1,2,\ldots,n\}$. If $f_0\neq1$, then
    $$
    p_\ad(1,z_1,\ldots,z_s)=\sum_{i=0}^n f_i=f_0+\sum_{i=1}^n f_i\neq1.
    $$
    On the other hand, if there exists $i_0\in\{1,2,\ldots,n\}$ such that $f_{i_0}\neq0$, then $p_\ad(1,z_1,\ldots,z_s)=\sum_{i=0}^n f_i$ must contain a homogeneous term $f_{i_0}$ of degree $i_0\geq1$. Consequently, $p_\ad(1,z_1,\ldots,z_s)\neq1$. This completes the proof.
\end{proof}

\bibliographystyle{alpha}
\bibliography{References.bib}

\begin{thebibliography}{GLW22}

\bibitem[AKL19]{orbit}
AV~Atanov, IG~Kossovskiy, and AV~Loboda.
\newblock On orbits of action of 5-dimensional non-solvable lie algebras in
  three-dimensional complex space.
\newblock 100:377--379, 2019.

\bibitem[AKY21]{spectral}
Fatemeh Azari~Key and Rongwei Yang.
\newblock Spectral invariants for finite dimensional lie algebras.
\newblock {\em Linear Algebra Appl.}, 611:148--170, 2021.

\bibitem[CCD19]{chen2019characteristic}
Zhiqi Chen, Xueqing Chen, and Ming Ding.
\newblock On the characteristic polynomial of sl (2, f).
\newblock {\em Linear Algebra and its Applications}, 579:237--243, 2019.

\bibitem[Cur92]{CurtisArticle}
Charles~W. Curtis.
\newblock Representation theory of finite groups: from frobenius to brauer.
\newblock {\em Math. Intelligencer}, 14:48--57, 1992.

\bibitem[Cur99]{CurtisBook}
Charles~W. Curtis.
\newblock {\em Pioneers of representation theory: Frobenius, Burnside, Schur,
  and Brauer}.
\newblock American Mathematical Society, Providence, RILondon Mathematical
  Society, London, 1999.

\bibitem[dG05]{solv}
Willem~A de~Graaf.
\newblock Classification of solvable lie algebras.
\newblock {\em Experimental mathematics}, 14(1):15--25, 2005.

\bibitem[FH13]{fulton2013representation}
William Fulton and Joe Harris.
\newblock {\em Representation theory: a first course}, volume 129.
\newblock Springer Science \& Business Media, 2013.

\bibitem[GK13]{goze}
Michel Goze and Yusupdjan Khakimdjanov.
\newblock {\em Nilpotent Lie Algebras}, volume 361.
\newblock Springer Science \& Business Media, 2013.

\bibitem[GLW22]{geng2022characteristic}
Amin Geng, Shoumin Liu, and Xumin Wang.
\newblock Characteristic polynomials and finitely dimensional representations
  of simple lie algebras.
\newblock {\em arXiv preprint arXiv:2211.00979}, 2022.

\bibitem[Hu21]{hu2021eigenvalues}
Zhiguang Hu.
\newblock Eigenvalues and eigenvectors of a class of irreducible tridiagonal
  matrices.
\newblock {\em Linear Algebra and its Applications}, 619:328--337, 2021.

\bibitem[Hum12]{humphreys2012introduction}
James~E Humphreys.
\newblock {\em Introduction to Lie algebras and representation theory},
  volume~9.
\newblock Springer Science \& Business Media, 2012.

\bibitem[HZ19]{hu2019determinants}
Zhiguang Hu and Philip~B Zhang.
\newblock Determinants and characteristic polynomials of lie algebras.
\newblock {\em Linear Algebra and its Applications}, 563:426--439, 2019.

\bibitem[JL22]{jiang2022characteristic}
Tianyi Jiang and Shoumin Liu.
\newblock Characteristic polynomials and finitely dimensional representations
  of sl (2, c).
\newblock {\em Linear Algebra and its Applications}, 647:78--88, 2022.

\end{thebibliography}
\end{document}